\newcommand{\noun}[1]{\textsc{#1}}
\numberwithin{equation}{section}
\numberwithin{figure}{section}
 \newcommand\thmsname{\protect\theoremname}
 \newcommand\nm@thmtype{theorem}
 \theoremstyle{plain}
 \newenvironment{namedthm}[1][Undefined Theorem Name]{
   \ifx{#1}{Undefined Theorem Name}\renewcommand\nm@thmtype{theorem*}
   \else\renewcommand\thmsname{#1}\renewcommand\nm@thmtype{namedtheorem}
   \fi
   \begin{\nm@thmtype}}
   {\end{\nm@thmtype}}
  \theoremstyle{remark}
  \newtheorem*{rem*}{\protect\remarkname}
\theoremstyle{plain}
\newtheorem{thm}{\protect\theoremname}[section]
  \theoremstyle{definition}
  \newtheorem{defn}[thm]{\protect\definitionname}
  \theoremstyle{plain}
  \newtheorem{prop}[thm]{\protect\propositionname}
  \theoremstyle{remark}
  \newtheorem{rem}[thm]{\protect\remarkname}
  \theoremstyle{plain}
  \newtheorem{lem}[thm]{\protect\lemmaname}
  \theoremstyle{plain}
  \newtheorem{cor}[thm]{\protect\corollaryname}
  \providecommand{\corollaryname}{Corollary}
  \providecommand{\definitionname}{Definition}
  \providecommand{\lemmaname}{Lemma}
  \providecommand{\propositionname}{Proposition}
  \providecommand{\remarkname}{Remark}
  \providecommand{\theoremname}{Theorem}
\providecommand{\theoremname}{Theorem}
\begin{document}
\global\long\def\tx#1{\mathrm{#1}}
\global\long\def\ww#1{\mathbb{#1}}
\global\long\def\dd#1{\tx d#1}
\newcommandx\fol[1][usedefault, addprefix=\global, 1=R]{\mathcal{F}_{#1}}
\global\long\def\ii{\tx i}
\global\long\def\adh#1{\tx{adh}\left(#1\right)}
\global\long\def\re#1{\Re\left(#1\right)}
\global\long\def\im#1{\Im\left(#1\right)}
\global\long\def\nf#1#2{\nicefrac{#1}{#2}}
\global\long\def\germ#1{\ww C\left\{  #1\right\}  }
\newcommandx\grp[2][usedefault, addprefix=\global, 1=R, 2=\Sigma]{\Gamma_{#1}\left(#2\right)}
\global\long\def\pp#1{\frac{\partial}{\partial#1}}
\newcommandx\sat[2][usedefault, addprefix=\global, 1=\fol]{\tx{Sat}_{#1}\left(#2\right)}
\global\long\def\tt#1{\mathtt{#1}}
\newcommandx\spt[1][usedefault, addprefix=\global, 1=\gamma]{\mathtt{#1}_{\star}}
\global\long\def\hol#1{\frak{h}_{#1}}
\newcommandx\ept[1][usedefault, addprefix=\global, 1=\gamma]{\mathtt{#1}^{\star}}
\global\long\def\id{\tx{Id}}
\global\long\def\norm#1{\left|\left|#1\right|\right|}
\global\long\def\condX{\tt X}
\global\long\def\condA{\tt a}
\global\long\def\condR{\tt R}
\global\long\def\rsupp{\tt{Res}\left(a,\lambda\right)}

\title{Extending to the complex line Dulac's corner maps of non-degenerate
planar singularities }

\date{December 2014}

\author{Loïc TEYSSIER}

\address{Laboratoire I.R.M.A., 7 rue R. Descartes, Université de Strasbourg,
67084 Strasbourg cedex, France}

\keywords{~}

\subjclass[2000]{{[}2010 classification{]}~34M30, 34E05, 37F75, 34M35, 32S65, 32M25}

\thanks{This work was partially supported by the grant ANR-13-JS01-0002-01
of the French National Research Agency.}

\email{\texttt{\href{mailto:teyssier@math.unistra.fr}{teyssier@math.unistra.fr}}}

\urladdr{\href{http://www-irma.u-strasbg.fr/~teyssier/}{http://www-irma.u-strasbg.fr/$\sim$teyssier/}}
\begin{abstract}
We study the complex Dulac map for a holomorphic foliation of the
complex plane, near a non-degenerate singularity (both eigenvalues
of the linearization are nonzero) with two separatrices. Following
the well-known results of \noun{Y.~Il'yashenko} we provide a geometric
approach allowing to study the whole maximal domain of (geometric)
definition of the Dulac map. In particular its topology and the regularity
of its boundary are completely described. We also study the order
of magnitude of the first non-trivial term of its asymptotic expansion
and show how to compute it using path integrals supported in the leaves
of the linearized foliation. Explicit bounds on the remainder are
given. We perform similarly the study of the Dulac time spent around
the singularity. All results are formulated in a unified framework
taking no heed to the usual dynamical discrimination (\emph{i.e.}
no matter whether the singularity is formally orbitally linearizable
or not and regardless of the arithmetic of the eigenvalues ratio).
\end{abstract}

\maketitle

\section{Introduction}

We consider a germ of a holomorphic vector field at the origin of
the complex plane $A\left(x,y\right)\pp x+B\left(x,y\right)\pp y$
admitting an isolated, non-degenerate singularity at $\left(0,0\right)$.
In other words the origin is the only local zero of the vector field,
and its linear part $\left[\nabla A\,,\,\nabla B\right]$ at this
point is a $2\times2$ matrix with two nonzero eigenvalues, of ratio
$\lambda\in\ww C_{\neq0}$. Up to choose differently the local analytic
coordinates (we particularly refer to Lemma~\ref{lem:preparation})
we may assume without loss of generality that the vector field admits
the following expression:
\begin{align*}
X_{R} & =\lambda x\pp x+\left(1+R\right)y\pp y\,\,~,~R\left(0,0\right)=0\,\,,~~R\in x^{a}\germ{x,y}~\tag{\ensuremath{\condX}}
\end{align*}
for some non-negative integer $a$. Our study is carried out on a
fixed polydisc $\mathcal{U}=\rho\ww D\times r\ww D$ small enough
for the relation 
\begin{align*}
\sup_{\mathcal{U}}\left|R\right| & <1\tag{\ensuremath{\condR}}
\end{align*}
to hold. At some point we also use the hypothesis 
\begin{align*}
\re{a+\frac{1}{\lambda}} & \geq0\,.\tag{\ensuremath{\condA}}
\end{align*}
 This setting encompasses almost all non-degenerate singularities,
including every kind of saddle singularities ($\lambda<0$).

\bigskip{}

We write $\fol$ the (holomorphic, singular) foliation of $\mathcal{U}$
whose leaves are defined by the integral curves of $X_{R}$. This
foliation admits two special leaves (called separatrices) each of
whose adherence corresponds to a branch of $\left\{ xy=0\right\} $.
We define 
\begin{eqnarray}
\hat{\mathcal{U}} & := & \mathcal{U}\backslash\left\{ xy=0\right\} ~.\label{eq:pointed_neighborhood}
\end{eqnarray}
 Outside $\left\{ x=0\right\} $ the foliation is transverse everywhere
to the fibers of the fibration 
\begin{eqnarray*}
\Pi\,:\,\left(x,y\right) & \longmapsto & x
\end{eqnarray*}
 and if $\left(\tt R\right)$ holds the foliation is transverse to
that of $\left(x,y\right)\mapsto y$ too. Being given $\left(x_{*},y_{*}\right)\in\hat{\mathcal{U}}$
it is thus possible (under suitable assumptions that will be detailed
later on) to lift in the foliation through $\Pi$ a path $\gamma$
linking $x$ to $x_{*}$, starting from the point $\left(x,y_{*}\right)$.
The arrival end-point of the lifted path defines uniquely a point
$\left(x_{*},y_{x}\right)\in\Pi^{-1}\left(x_{*}\right)$. This construction
yields a locally analytic map from the transverse disc $\left\{ y=y_{*}\right\} $
into the transverse disc $\left\{ x=x_{*}\right\} $, which is known
as the \textbf{Dulac map} 
\begin{eqnarray*}
\mathcal{D}_{R}\,:\,x\neq0 & \longmapsto & y_{x}
\end{eqnarray*}
 of $X_{R}$ associated to $\left(x_{*},y_{*}\right)$. This map is
in general multivalued, and its monodromy is generated by the holonomy
of $\fol[R]$ computed on $\left\{ x=x_{*}\right\} $ by winding around
$\left\{ x=0\right\} $.

\begin{figure}[H]
\includegraphics[width=6cm]{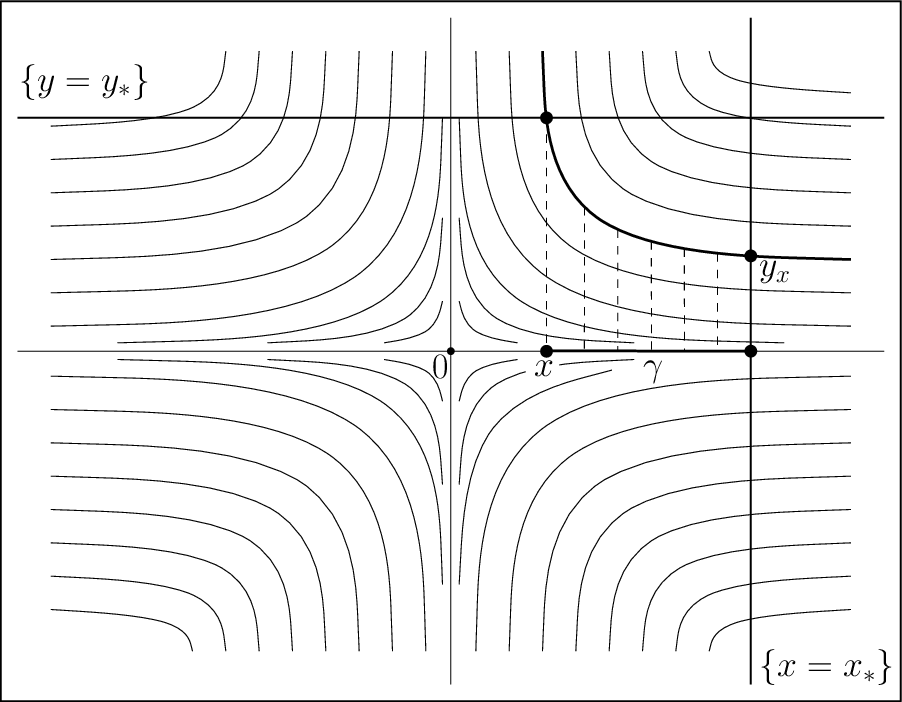}

\caption{The Dulac map, pictured in a real slice.}

\end{figure}

\noun{Y.~Il'Yashenko} carried out important works~\cite{IlYaRus,Ilya,IlyaDu}
aimed at studying the germ of a subdomain of $\left\{ y=y_{*}\right\} $
at $\left(x_{*},y_{*}\right)$ on which the germ of a mapping $\mathcal{D}_{R}$
is holomorphic and has «interesting» asymptotics, in connection to
Dulac's conjecture and ultimately Hilbert's $16^{\tx{th}}$ problem.
We give more contextual details in paragraphs to come, let us just
say for now that \noun{Y.~Il'Yashenko} proved $\mathcal{D}_{R}$
is defined at least on a \emph{standard quadratic domain} in logarithmic
coordinates. By contrast, our concern here is to consider $\mathcal{D}_{R}$
as a global mapping and to describe in a detailed manner the maximal
Riemann surface $\Omega_{R}$ on which $\mathcal{D}_{R}$ is defined
(for which the geometric construction above can be performed\footnote{It may happen that $\mathcal{D}_{R}$ admits a bigger Riemann surface
as a multivalued map, but this case is not dealt with here.}). Our main result is the following:
\begin{namedthm}[Main Theorem]
\label{thm:dulac_topology}Assume conditions $\left(\condX\right)$
and $\left(\condR\right)$ hold.
\begin{enumerate}
\item $\Omega_{R}$ is simply connected and embeds into the universal covering
of $\left\{ y=y_{*}~,~x\neq0\right\} $.
\item Any component of the boundary $\partial\Omega_{R}$ is a piecewise-analytic
curve.
\item If $\re{\lambda}\geq0$ then $\Omega_{R}$ is connected.
\end{enumerate}
\end{namedthm}
\begin{rem*}
~
\begin{enumerate}
\item Notice that the result holds as soon as the quantitative and explicit
conditions $\left(\tt X\right)$,~$\left(\tt R\right)$ are fulfilled:
the theorem is a semi-local result. 
\item If the condition $\left(\condR\right)$ is not met then the foliation
$\fol$ is likely to fail being transverse to the fibration $\Pi$
at some point. The Dulac map becomes multivalued in the presence of
such tangency points, meaning that $\Omega_{R}$ can no longer be
embedded in the universal cover of the punctured transverse line.
The topology of $\Omega_{R}$ can therefore get richer, \emph{e.g.}
when the tangency point corresponds to a finite branch-point: in that
case $\adh{\Omega_{R}}$ contains an orbifold point and $\Omega_{R}$
is not simply connected anymore.
\item Conclusions~(1) and~(3) are still valid when $R$ is merely $C^{1}$
as a real function since their proofs actually only use elementary
variational and topological arguments based on the sole knowledge
of $\sup_{\mathcal{U}}\left|R\right|$, as well as the local rectification
theorem for vector fields. Conclusion~(3) may hold with $\partial\Omega_{R}$
of the same regularity as $R$ but the local finiteness requirement
in <<piecewise>> might be violated in some instances (see in particular
Section~\ref{sub:boundary}).
\end{enumerate}
\end{rem*}

\subsection{Context and known facts }

The Dulac map governs part of the dynamics of $\fol$ and has been
submitted to an intense study at least in the two settings we describe
now.

\subsubsection{The proof of Dulac's conjecture regarding finiteness of the number
of limit cycles for analytic vector fields in the real plane}

Dulac maps are basic ingredients of the cross-section first-return
map along a poly-cycle, whose attractive fixed-points correspond to
limit cycles. As was noticed by \noun{Y.Il'yashenko}~\cite{Ilya}
the original <<proof>> of \noun{H.~Dulac}~\cite{DuDu} crucially
depends on a lemma which turned out to be false. Many powerful, if
intricate, tools have been developed in the 1980's decade which finally
led to a complete proof of Dulac's conjecture. Two parallel approaches
evolved at the time to analyze the asymptotic expansion of the Dulac
map: \noun{J.~Écalle~\cite{EcalDu}} studied it first formally using
trans-series then through resurgent summation techniques, while \noun{Y.~Il'yashenko}
devised an argument based on super-accurate asymptotic series in the
book~\cite{IlyaDu} dedicated to the proof of Dulac's conjecture.
The key point of the argument is that a map with null asymptotic series
should vanishes identically. This is not true for every possible domains,
which is the reason why Dulac's attempt failed. A central class of
domains for which that property does hold are called standard (quadratic)
domains. 

Although various authors contributed to the tale of Dulac's conjecture,
the aim of this article is not to offer a comprehensive list. We make
the choice to refer the reader to the textbook~\cite{IlYakov} for
more details regarding the context in which the Dulac map intervenes,
for this book has a more geometric flavor. We also mention the unpublished
material~\cite{Lolo} containing many dynamical details in the complex
setting.
\begin{namedthm}[Regularity Theorem \cite{IlyaDu,IlYaRus}]
Assume $\lambda<0$. Up to reduce the value of $\rho$ and $r$,
the set $\Omega_{R}$ contains a \textbf{standard quadratic domain
}for $C>0$ big enough: 
\begin{eqnarray}
\Omega\left(C\right) & := & \varphi_{C}\left(\left\{ z~:~\re z>0\right\} \right)\label{eq:standard_domain}
\end{eqnarray}
image of the real half-plane $\left\{ \re z>0\right\} $ by the conformal
mapping 
\begin{eqnarray*}
\varphi_{C}~:~z & \longmapsto & z_{*}-z-C\sqrt{1+z}~.
\end{eqnarray*}
\end{namedthm}
\begin{rem*}
~
\begin{enumerate}
\item The constant $C$ can be made as small as wished by taking $x_{*}$
close enough to $0$. Viewed in the original $x$-variable, this domain
contains germs of a sector around $\left\{ x=0\right\} $ of arbitrary
aperture.
\item The most recent (and so far shortest) version of the proof given in~\cite[chap. IV]{IlYakov}
uses the condition~$\left(\tt R\right)$ (or more precisely the bound
$\left|R\right|<\nf 12$ with $a=0$). The approach ultimately relies
on the fact that the holonomy of $\fol[R]$ is a parabolic germ, which
happens only when $\lambda$ is rational, and no useful replacement
estimate of the behavior of the iterates of the holonomy currently
exist when $\lambda$ is an irrational.
\item In~\cite{IlyaDu,IlYaRus} a proof of the statement for every $\lambda\in\ww R_{<0}$
is performed under the (non-restrictive, see Lemma~\ref{lem:preparation})
assumption that $a>0$ is big enough.
\end{enumerate}
\end{rem*}

\subsubsection{The topology of singular germs of a planar holomorphic foliation}

The study of the dynamics of a singular foliation through Seidenberg's
reduction\footnote{According to Seidenberg's algorithm (see~\cite{Seiden}) any isolated
singularity of a germ of a holomorphic foliation $\mathcal{F}$ can
be <<reduced>> through a proper, rational map $\pi\,:\,\mathcal{M}\to\left(\ww C^{2},0\right)$,
where $\mathcal{M}$ is a conformal neighborhood of a tree $E:=\pi^{-1}\left(0,0\right)$
of normally-crossing, conformal divisors $\ww P_{1}\left(\ww C\right)$.
The pulled-back foliation $\pi^{*}\mathcal{F}$ has only isolated,
reduced singularities (located on $E$): either non-degenerate or
of saddle-node type (exactly one nonzero eigenvalue).} process naturally involves Dulac maps as <<corner maps>> encoding
the transition between different components of the exceptional divisor.
They measure how the different components of the projective holonomy
pseudo-group mix together. In that context \noun{D.~Mar\'{i}n} and
\noun{J.-F.~Mattei~\cite{MarMat} }proved that under suitable (generic)
hypothesis a germ of a singular foliation is locally incompressible:
there exists an adapted base of neighborhoods of $\left(0,0\right)$
in which the (non-trivial) cycles lying in the leaves of the restricted
foliation must wind around the complement of the separatrix locus,
in the trail of Milnor's theorem regarding holomorphic fibrations
outside the singular fibers. This study is the first step towards
a complete analytical and topological classification of (generic)
singular germs of a foliation~\cite{MarMatMono}. 

One of the main ingredients of their proof is the control of the <<roughness>>
of the corner maps and elements of the projective holonomy. This roughness
can be read in the first two terms of the asymptotic expansion of
the Dulac map. In that respect Proposition~\ref{prop:model_dominant},
stated further down, allows to get rid of a non-necessary technical
hypothesis in Mar\'{i}n-Mattei's theorem, namely discarding <<bad\footnote{A «bad» irrational number is unusually well approximated by rational
numbers (the so-called «small-divisors» problem), characterized by
Brjuno's explicit arithmetic condition~\cite{Brju} expressed in
terms of the convergents of $\lambda$.}>> irrational ratios appearing in Seidenberg's reduction of the singularity.
We refer to \cite{MarTey} for a complete dealing with the more general
setting, as well as the proof of:
\begin{namedthm}[Incompressibility Theorem \cite{MarTey}]
Take $\lambda\neq0$ and assume that conditions~$\left(\condX\right)$
and~$\left(\condR\right)$ are fulfilled. Let $\mathcal{E}\,:\,\left(z,w\right)\longmapsto\left(\exp z,\exp w\right)$
be the universal covering of $\hat{\mathcal{U}}$. Then the foliation
$\mathcal{E}^{*}\fol[R]$ is regular and each one of its leaves is
simply-connected.
\end{namedthm}

\subsection{Discussion and additional results}

Let us begin with formulating a few remarks.
\begin{itemize}
\item We provide a framework which does not depend on the usual dynamical
discrimination (\emph{i.e.} no matter whether the singularity is formally
orbitally linearizable or not and regardless of the arithmetic of
$\lambda$). We only distinguish between \textbf{node-like} ($\re{\lambda}\geq0$)
and \textbf{saddle-like singularities} ($\re{\lambda}<0$). Notice
that if $\lambda\notin\ww R$ the singularity is hyperbolic, thus
orbitally linearizable by Poincaré's theorem: the genuinely difficult
cases arise only for real $\lambda$ (actually $\lambda<0$), which
are the cases usually studied in the literature. Yet our constructions
are real-analytic with respect to $\lambda$, allowing irregular situations
to be studied as limiting cases of \emph{e.g.} hyperbolic situations.
\item In the setting of real planar vector fields a \textbf{quasi-resonant
saddle} ($\lambda\in\ww R_{<0}\backslash\ww Q$) is always orbitally
conjugate to its linear part by a (sufficiently more regular than)
$C^{\infty}$ change of coordinates~\cite{Ilya,IlyaDu}, which explains
why part of the literature (\emph{e.g.}~\cite{IlYakov}) mainly focus
on \textbf{resonant saddles} ($\lambda\in\ww Q_{<0}$). This is no
longer true in the complex plane in the presence of <<bad>> irrational
ratios~\cite{Ricard}. Although a quasi-resonant saddle is orbitally
conjugate to its linearization in the universal covering of $\hat{\mathcal{U}}$,
this conjugacy cannot extend to a $C^{\infty}$ map along the lift
of the separatrices. Therefore it is not sufficient to study linear
foliations to encompass complex Dulac maps or their asymptotics in
the case of quasi-resonant saddles.
\item We express below the Dulac map as an integral (more precisely, as
the integral of the differential $1$-form $R\dd{\left(\log x^{-\nf 1{\lambda}}\right)}$
along paths tangent to $\fol$). This integral form, while not being
strictly speaking an «integral representation», will prove useful
when performing computations. Let us call this integration process
the\textbf{ characteristics} of $R$ along $\fol[R]$.
\end{itemize}
Following the Incompressibility Theorem the natural space to study
the Dulac map is the universal covering of $\hat{\mathcal{U}}$ (see~(\ref{eq:pointed_neighborhood})):
\begin{eqnarray*}
\mathcal{E}~:~\tilde{\mathcal{U}} & \longrightarrow & \hat{\mathcal{U}}\\
\left(z,w\right) & \longmapsto & \left(\exp z~,~\exp w\right)~.
\end{eqnarray*}
The Main Theorem~(1) asserts that $\mathcal{D}_{R}$ can be understood
as a local holomorphic function $z\mapsto w_{z}$, after having fixed
once and for all a preimage $\left(z_{*},w_{*}\right)\in\mathcal{E}^{-1}\left(x_{*},y_{*}\right)$
and set $\mathcal{D}_{R}\left(z_{*}\right):=w_{*}$. Because its Riemann
surface $\Omega_{R}$ is simply connected the map $\mathcal{D}_{R}$
actually is holomorphic on an open set of $\left\{ w=w_{*}\right\} $,
still written $\Omega_{R}$, corresponding to those $z$ giving birth
to a path $\gamma_{R}\left(z\right)$ tangent to $\mathcal{E}^{*}\fol[R]$
with starting endpoint $\left(z,w_{*}\right)$ and landing endpoint
$\left(z_{*},w_{z}\right)$.
\begin{namedthm}[Proximity Theorem]
\label{thm:dulac_asymptotics}Assume that conditions~$\left(\condX\right)$
and~$\left(\condR\right)$ hold. 
\begin{enumerate}
\item For every $z\in\Omega_{R}$ we have
\begin{eqnarray*}
\mathcal{D}_{R}\left(z\right) & = & w_{*}+\frac{z_{*}-z}{\lambda}+\frac{1}{\lambda}\int_{\gamma_{R}\left(z\right)}R\circ\mathcal{E}\,\,\dd z\,.
\end{eqnarray*}

\item If $\re{\lambda}<0$ and the condition~$\left(\condA\right)$ is
satisfied then one has the asymptotic approximation 
\begin{eqnarray*}
\int_{\gamma_{R}\left(z\right)}R\circ\mathcal{E}\,\dd z & = & \int_{\gamma_{0}\left(z\right)}R\circ\mathcal{E}\,\dd z+o\left(\left|z\exp\nf{-z}{\lambda}\right|\right)
\end{eqnarray*}
when $\re z$ tends to $-\infty$ with a bounded imaginary part.
\end{enumerate}
\end{namedthm}
Although the Dulac map's asymptotic expansion on standard domains
can be expressed formally as
\begin{eqnarray*}
\mathcal{D}_{R}\left(x\right) & \simeq & \sum_{n,m}D_{n,m}s_{n,m}\left(x\right)\,,~~~~~D_{n,m}\in\ww C~,
\end{eqnarray*}
where
\begin{eqnarray*}
s_{n,m}\left(x\right) & = & \begin{cases}
\frac{x^{n\lambda+m}-x_{*}^{n\lambda+m}}{n\lambda+m} & \mbox{ if }n\lambda+m\neq0\\
\log\frac{x}{x_{*}} & \mbox{ otherwise}
\end{cases}
\end{eqnarray*}
is the Écalle-Roussarie compensator, the expansion converges if, and
only if, the foliation is analytically normalizable\footnote{Situation which arises not so often in the most interesting (quasi-)resonant
cases.}, as was proved by \noun{A.~Mourtada} and\noun{ R.~Moussu} \cite[Proposition~1]{MouMour}.
We compute the first non-trivial term in the asymptotic expansion
for all saddle-like singularities. We also derive explicit bounds
on the remainder in semi-infinite «horizontal» bands $\left\{ \re z<\tx{cst}~,~\left|\im z\right|<\tx{cst}\right\} $).
It is most probably possible to obtain explicit bounds on sub-standard-domains
following the estimate produced in~\cite{IlyaDu,IlYaRus}. I decided
not to include this computation for the sake of concision. Also these
sharper bounds are not needed to deal with incompressibility of generic
degenerate singularities~\cite{MarMat,MarMatMono,MarTey}.

The computation of the characteristics along the model $\fol[0]$
in~(4) can be carried out explicitly. The exact value of $\int_{\gamma_{0}\left(z\right)}\left(x^{n}y^{m}\right)\circ\mathcal{E}\dd z$
does not offer an insightful interest as such (see Section~\ref{sec:asymptotics}).
We can nonetheless deduce from it the dominant part of $\int_{\gamma_{0}\left(z\right)}R\circ\mathcal{E}\dd z$,
which splits into two components: the \emph{regular part }$\int_{z}^{z_{*}}R\left(\exp u,0\right)\dd u$,
which induces a holomorphic function in $\mathcal{U}$ since $R\left(0,0\right)=0$,
and the \emph{resonant part }obtained by selecting in $R$ only well-chosen
monomials. 
\begin{defn}
\label{def_dominant_support}The \textbf{resonant support} $\rsupp$
associated to $\left(a,\lambda\right)$ is 
\begin{itemize}
\item the empty set if $\lambda\notin\ww R_{<0}$,
\item otherwise the subset of $\ww N^{2}$ defined by 
\begin{eqnarray*}
\rsupp & := & \left\{ \left(n,m\right)\in\ww N^{2}\,:\,m>0\,,\,n\geq a\,,\,\left|n\lambda+m\right|<\frac{1}{2n}\right\} \,.
\end{eqnarray*}

\end{itemize}
For $G\left(x,y\right)=\sum_{n\geq0,\,m\geq0}G_{n,m}x^{n}y^{m}\in\germ{x,y}$
we denote by $G_{0}$ the \textbf{regular part} of $G$
\begin{eqnarray*}
G_{0}\left(x\right) & := & G\left(x,0\right)
\end{eqnarray*}
and by $G_{\tt{Res}}$ its \textbf{resonant part} 
\begin{eqnarray*}
G_{\tt{Res}}\left(x,y\right) & := & \sum_{\left(n,m\right)\in\rsupp}G_{n,m}x^{n}y^{m}\,.
\end{eqnarray*}

\end{defn}
It turns out that this support corresponds indeed to resonant or quasi-resonant
monomials, according to the rationality of $\lambda$ (Lemma~\ref{lem:dominant_support}),
which carry without surprise the major part of the non-regular characteristics.
\begin{prop}
\label{prop:model_dominant}Assume that $\re{\lambda}<0$ and conditions~$\left(\condX\right)$,~$\left(\condR\right)$
and~$\left(\condA\right)$ holds. For any $G\in\germ{x,y}$ we have
\begin{eqnarray*}
\int_{\gamma_{0}\left(z\right)}G_{0}\circ\mathcal{E}\dd z & = & G_{0}\left(0\right)\left(z_{*}-z\right)+O\left(\exp z\right)
\end{eqnarray*}
and if moreover $G\in x^{a}\germ{x,y}$ 
\begin{eqnarray*}
\int_{\gamma_{0}\left(z\right)}G_{\tt{Res}}\circ\mathcal{E}\dd z & = & O\left(\left|z\exp\nf{-z}{\lambda}\right|\right)\\
\int_{\gamma_{0}\left(z\right)}\left(G-G_{{\tt{Res}}}-G_{0}\right)\circ\mathcal{E}\dd z & = & O\left(\left|\exp\nf{-z}{\lambda}\right|\right)
\end{eqnarray*}
(here again $O\left(\bullet\right)$ regards the situation when $\re z$
tends to $-\infty$ while $z$ has bounded imaginary part).
\end{prop}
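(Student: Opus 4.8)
The plan is to reduce the whole statement to one elementary integral attached to each monomial of $G$, and then to sum termwise. Since $\mathcal{E}_{*}\bigl(\lambda\pp z+\pp w\bigr)=\lambda x\pp x+y\pp y$, the leaves of $\mathcal{E}^{*}\fol[0]$ are the lines $s\longmapsto\left(z+\lambda s,\,w_{*}+s\right)$, so $\gamma_{0}\left(z\right)$ — the lift through $\mathcal{E}^{*}\fol[0]$ starting at $\left(z,w_{*}\right)$ and landing above $x_{*}$ — is the segment from $\left(z,w_{*}\right)$ to $\left(z_{*},w_{*}+\frac{z_{*}-z}{\lambda}\right)$, its first coordinate running over $\left[z,z_{*}\right]$. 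The change of variable $u=z+\lambda s$ then gives, for every $\left(n,m\right)\in\ww N^{2}$,
\[
\int_{\gamma_{0}\left(z\right)}\left(x^{n}y^{m}\right)\circ\mathcal{E}\,\dd z\;=\;e^{mw_{*}}\,e^{-mz/\lambda}\int_{z}^{z_{*}}e^{\left(n+m/\lambda\right)u}\,\dd u\,.
\]
Here the inner integral is read as $z_{*}-z$ when $n\lambda+m=0$, and then $e^{-mz/\lambda}=e^{nz}$. The point of this closed form is that \emph{no} small divisor $\frac{1}{n\lambda+m}$ is visible, which is what will make the estimates insensitive to the arithmetic of $\lambda$. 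Everything then follows by expanding $G=\sum_{n,m}G_{n,m}x^{n}y^{m}$, summing the above, and invoking Cauchy's estimates $\left|G_{n,m}\right|\leq M\rho_{0}^{-n}r_{0}^{-m}$ on a bidisc of convergence of $G$.

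The regular part ($m=0$) is immediate: termwise integration gives $G_{0}\left(0\right)\left(z_{*}-z\right)$ plus the convergent series $\sum_{n\geq1}\frac{G_{n,0}}{n}\bigl(e^{nz_{*}}-e^{nz}\bigr)$, a holomorphic function of $e^{z}$ near the origin; this is the first formula. The monomials occurring in $G-G_{\tt{Res}}-G_{0}$ have $m\geq1$, $n\geq a$, and, when $\lambda\in\ww R_{<0}$, satisfy $\left|n\lambda+m\right|\geq\frac{1}{2n}$; when $\lambda\notin\ww R_{<0}$ they satisfy the uniform lower bound $\left|n\lambda+m\right|\geq n\left|\im{\lambda}\right|$. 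In both cases $\frac{1}{\left|n\lambda+m\right|}$ grows at most linearly in $n$, so I would bound the integral above by $C\,n\,e^{m\re{w_{*}}}\bigl(\bigl|e^{\left(n+m/\lambda\right)z_{*}-mz/\lambda}\bigr|+e^{n\re z}\bigr)$. Since $m\geq1$, the first term is $\leq e^{n\re{z_{*}}+\re{z_{*}/\lambda}}\bigl|\exp\nf{-z}{\lambda}\bigr|$; for the second I would peel off the leading power, $e^{n\re z}=e^{a\re z}\bigl(e^{\re z}\bigr)^{n-a}$, using $e^{a\re z}\leq C\bigl|\exp\nf{-z}{\lambda}\bigr|$ — this is exactly where condition $\left(\condA\right)$, i.e. $a\geq-\re{1/\lambda}$, enters. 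Summing against Cauchy's estimates converges once $\re{z_{*}}$ and $\re{w_{*}}$ are taken negative enough, and produces $O\bigl(\bigl|\exp\nf{-z}{\lambda}\bigr|\bigr)$, which is the third formula.

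The resonant part is the delicate one: for a Liouville ratio $\lambda$ the divisor $n\lambda+m$ with $\left(n,m\right)\in\rsupp$ can be super-exponentially small, so I would argue directly from the closed form and never invert it. Here $\lambda\in\ww R_{<0}$, $m\geq1$, and $\bigl|\exp\nf{-z}{\lambda}\bigr|=e^{\re z/\left|\lambda\right|}<1$; hence $\bigl|e^{-mz/\lambda}\bigr|=\bigl|\exp\nf{-z}{\lambda}\bigr|^{m}\leq\bigl|\exp\nf{-z}{\lambda}\bigr|$, while with $\mu=\left(n\lambda+m\right)/\lambda$ one has $\bigl|\int_{z}^{z_{*}}e^{\mu u}\,\dd u\bigr|\leq\left|z_{*}-z\right|\max\bigl(\bigl|e^{\mu z}\bigr|,\bigl|e^{\mu z_{*}}\bigr|\bigr)$. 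If $n\lambda+m\leq0$ then $\mu\geq0$ and the maximum equals $\bigl|e^{\mu z_{*}}\bigr|\leq1$; if $n\lambda+m>0$ then $\mu<0$, the maximum equals $e^{\mu\re z}$, and the cancellation $\bigl|e^{-mz/\lambda}\bigr|\,e^{\mu\re z}=e^{n\re z}\leq e^{a\re z}\leq C\bigl|\exp\nf{-z}{\lambda}\bigr|$ holds, again by $\left(\condA\right)$. Either way $\bigl|\int_{\gamma_{0}\left(z\right)}\left(x^{n}y^{m}\right)\circ\mathcal{E}\,\dd z\bigr|\leq e^{m\re{w_{*}}}\left|z_{*}-z\right|\bigl|\exp\nf{-z}{\lambda}\bigr|$. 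Since $m=n\left|\lambda\right|+O(1/n)$ along $\rsupp$, the series $\sum_{\left(n,m\right)\in\rsupp}\left|G_{n,m}\right|e^{m\re{w_{*}}}$ converges as soon as $e^{\re{w_{*}}}<r_{0}\rho_{0}^{1/\left|\lambda\right|}$ — which is what fixes the threshold on $w_{*}$ — and summing yields $O\bigl(\left(\left|z_{*}\right|+\left|z\right|\right)\bigl|\exp\nf{-z}{\lambda}\bigr|\bigr)=O\bigl(\bigl|z\exp\nf{-z}{\lambda}\bigr|\bigr)$.

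I expect this resonant estimate to be the only genuine obstacle: the whole argument hinges on never dividing by $n\lambda+m$, which is what forces one to work from the closed-form integral and to split on the sign of $n\lambda+m$, with condition $\left(\condA\right)$ used in the sharp form that makes $e^{n\re z}$ dominated by $\bigl|\exp\nf{-z}{\lambda}\bigr|$. The remaining work is routine Cauchy bookkeeping whose only role is to pin down how negative $\re{z_{*}}$ and $\re{w_{*}}$ must be chosen.
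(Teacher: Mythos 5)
Your route is the same as the paper's: reduce everything to the closed form $T_{n,m}\left(z\right)=e^{mw_{*}}e^{-\nf{mz}{\lambda}}\int_{z}^{z_{*}}e^{\left(n+\nf m{\lambda}\right)u}\dd u$, sum termwise against Cauchy estimates, use the lower bound $\left|n\lambda+m\right|\geq\frac{1}{2n}$ (resp. $n\left|\im{\lambda}\right|$) off $\rsupp$ for the third formula, never divide by $n\lambda+m$ on $\rsupp$, and invoke $\left(\condA\right)$ to absorb $e^{a\re z}$ into $\left|\exp\nf{-z}{\lambda}\right|$. Your explicit split on the sign of $n\lambda+m$ in the resonant case is actually a point in your favour: the inequality $\left|e^{w}-1\right|\leq\left|w\right|$ only holds for $\re w\leq0$, so one genuinely must use the endpoint $z_{*}$ (rather than $z$) to evaluate the maximum of $\left|e^{\mu u}\right|$ when $\mu=n+\nf m{\lambda}\geq0$, which is what your case distinction does. (A shared cosmetic issue: the regular part produces the constant $\sum_{n\geq1}\frac{G_{n,0}}{n}e^{nz_{*}}$, which is $O\left(1\right)$ but not $O\left(\exp z\right)$; this is a quirk of the statement itself, not of your argument.)

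The one genuine defect is in the summation step for $G_{\tt{Res}}$. By collapsing $\left|e^{-\nf{mz}{\lambda}}\right|=\left(e^{\nf{\re z}{\left|\lambda\right|}}\right)^{m}$ to the single factor $\left|\exp\nf{-z}{\lambda}\right|$ (and, in the branch $\mu<0$, by collapsing $e^{n\re z}$ to $e^{a\re z}$), you discard the only geometric decay in $n$ that is guaranteed to dominate the Cauchy factor $\rho_{0}^{-n}$, namely $e^{n\re z}$ with $e^{\re z}<\rho<\rho_{0}$. You are then forced to require $\sum_{\left(n,m\right)\in\rsupp}\left|G_{n,m}\right|e^{m\re{w_{*}}}<\infty$, i.e. $e^{\re{w_{*}}}<r_{0}\rho_{0}^{\nf 1{\left|\lambda\right|}}$, a smallness condition on $\left|y_{*}\right|$ which is not among the hypotheses: the proposition is asserted for an arbitrary base point of $\hat{\mathcal{U}}$, and neither $r_{0}\rho_{0}^{\nf 1{\left|\lambda\right|}}>r$ nor $\left(\nf r{r_{0}}\right)^{\left|\lambda\right|}<\rho_{0}$ can be assumed. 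The repair is to peel off only one power: write $\left(e^{\nf{\re z}{\left|\lambda\right|}}\right)^{m}\leq\left|\exp\nf{-z}{\lambda}\right|\cdot\left(e^{\nf{\re z}{\left|\lambda\right|}}\right)^{m-1}$ and note that along $\rsupp$ one has $\frac{m-1}{\left|\lambda\right|}\geq n-a-\frac{1}{2a\left|\lambda\right|}$ by $\left(\condA\right)$, so the retained factor still decays like $e^{\left(n-a\right)\re z}$ up to a bounded constant and beats $\rho_{0}^{-n}$ for every $z$ in the band; this yields $O\left(\left|z\exp\nf{-z}{\lambda}\right|\right)$ with no restriction on $w_{*}$, which is in substance how the paper sums (it keeps $e^{n\re z}$ inside the series and only extracts $e^{a\re z}$ after summation).
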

As far as I know the above result is new when $\lambda<0$ is irrational.
Notice that in this case and if $G_{\tt{Res}}$ is finitely supported
then 
\begin{eqnarray*}
\int_{\gamma_{0}\left(z\right)}G_{\tt{Res}}\circ\mathcal{E}\dd z & = & O\left(\left|\exp\nf{-z}{\lambda}\right|\right)\,.
\end{eqnarray*}

\begin{rem}
If the condition~$\left(\tt a\right)$ is not fulfilled then terms
of order of magnitude $\exp az$ will appear instead of $\exp\nf{-z}{\lambda}$.
This noise can be avoided by normalizing $X_{R}$ further (Lemma~\ref{lem:preparation}).
\end{rem}

\subsection{Time spent near the singularity}

In applications (for instance in the study of real analytic planar
vector fields) it is sometimes important to estimate the Dulac time,
that is the time it takes to drift from $\left(z,w_{*}\right)$ to
$\left(z_{*},\mathcal{D}_{R}\left(z\right)\right)$ in the flow of
the vector field. In the case of $X_{R}$ this time is obviously 
\begin{eqnarray*}
\mathcal{T}_{R,1}\left(z,z_{*}\right):=\frac{z-z_{*}}{\lambda} & = & \int_{\gamma_{R}\left(z\right)}\frac{\dd z}{\lambda}\,.
\end{eqnarray*}
Multiplying $X_{R}$ by a holomorphic unit $U$ does not change the
underlying foliation (\emph{i.e. }the Dulac map), although it does
the Dulac time $\mathcal{T}_{R,U}$. The later is obtained by integrating
a time-form\footnote{A meromorphic $1$-form $\tau$ is a time-form for a vector field
$X$ when $\tau\left(X\right)=1$.}. \noun{P.~Marde\v{s}i\'{c}} and \noun{M.~Saavedra} performed
the complex analytic continuation of the Dulac time near resonant
saddle singularities in~\cite{PavaSaav}. Their approach follows
the techniques of~\cite{IlYakov} while providing slightly different
paths of integration $\gamma_{R}\left(z\right)$. This work was followed
by a paper~\cite{MarMarVi} of \noun{P.~Marde\v{s}i\'{c}}, \noun{D.~Mar\'{i}n
}and \noun{J.~Villadelprat}, studying the asymptotic expansion of
the Dulac time with a controlled bound on the remainder in families
of orbitally linearizable real planar vector fields. The results presented
here complete and sharpen these studies in the more general setting
described in the Proximity~Theorem and Proposition~\ref{prop:model_dominant}.

\bigskip{}

One can choose the time-form as 
\begin{eqnarray*}
\tau\left(x,y\right) & := & \frac{\dd x}{\lambda xU\left(x,y\right)}\,,
\end{eqnarray*}
so that the next result holds.
\begin{thm}
\label{thm:dulac_time}Take $U\in\mathcal{O}\left(\mathcal{U}\right)$
such that $U\left(0,0\right)\neq0$. Assume that $\mathcal{U}$ is
chosen in such a way that, in addition to conditions~$\left(\condX\right)$
and~$\left(\condR\right)$, the holomorphic function $U|_{\mathcal{U}}$
never vanishes. Then the Dulac time is holomorphic on $\Omega_{R}$
and
\begin{eqnarray*}
\mathcal{T}_{R,U}\left(z\right) & = & \int_{\gamma_{R}\left(z\right)}\frac{\dd z}{\lambda U\circ\mathcal{E}}\,.
\end{eqnarray*}
If moreover $\re{\lambda}<0$, the condition~$\left(\condA\right)$
holds and $U-U_{0}\in x^{a}y\germ{x,y}$ then, as $\re z$ tends to
$-\infty$ with bounded imaginary part,
\begin{eqnarray*}
\mathcal{T}_{R,U}\left(z\right)-\int_{\gamma_{0}\left(z\right)}\frac{\dd z}{\lambda U_{0}\circ\mathcal{E}} & = & \int_{\gamma_{0}\left(z\right)}\frac{\dd z}{\lambda U_{{\tt{Res}}}\circ\mathcal{E}}+o\left(\left|z\exp\nf{-z}{\lambda}\right|\right)\,.
\end{eqnarray*}

\end{thm}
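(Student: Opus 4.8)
The plan is to reduce the statement about the Dulac time to the already-established results on the Dulac map (Theorem~\ref{thm:dulac}) and on the model characteristics (Proposition~\ref{prop:model_dominant}), by recognizing that the time-integrand $\frac{1}{\lambda U\circ\mathcal{E}}$ plays here exactly the role that $\frac{1}{\lambda}(1+R\circ\mathcal{E})$ plays for the Dulac map itself. First I would observe that since $U|_{\mathcal U}$ never vanishes and $\mathcal U$ is a polydisc, the function $\frac1U$ is holomorphic and bounded on $\mathcal U$, hence lifts to a bounded holomorphic function $\frac{1}{U\circ\mathcal E}$ on $\mathcal E^{-1}(\hat{\mathcal U})$; the lifted path $\gamma_R(z)$ tangent to $\mathcal E^*\fol[R]$, landing at $(z_*,w_z)$, is exactly the path produced in the proof of Theorem~\ref{thm:dulac} and lives over $\Omega_*$. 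Integrating the time-form $\tau$ along this path computes the flow-time between the two transversals, which gives holomorphy of $\mathcal T_{R,U}$ on $\Omega_*$ together with the integral formula $\mathcal T_{R,U}(z)=\int_{\gamma_R(z)}\frac{\dd z}{\lambda U\circ\mathcal E}$. This first part is essentially a repackaging of the geometric construction already carried out, so the only thing to check carefully is that multiplying $X_R$ by $U$ leaves $\gamma_R(z)$ unchanged as a path (only its time-parametrization changes), which is immediate.

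For the asymptotic part, write $V:=\frac1U$, which is holomorphic on $\mathcal U$, and decompose $V=V_0+V_{\tt{Res}}+(V-V_{\tt{Res}}-V_0)$ as in Definition~\ref{def_dominant_support}. The hypothesis $U-U_0\in x^a y\germ{x,y}$ translates, after inverting the unit, into $V-V_0\in x^a y\germ{x,y}$ as well: indeed $V-V_0=\frac{U_0-U}{UU_0}$ and the numerator lies in $x^a y\germ{x,y}$ while $\frac{1}{UU_0}$ is a holomorphic unit, and membership in the ideal $x^a y\germ{x,y}$ is preserved by multiplication by such a unit. Hence $V-V_0\in x^a\germ{x,y}$ and moreover vanishes on $\{y=0\}$, so Proposition~\ref{prop:model_dominant} applies verbatim with $G=V-V_0$ (the ``$x^a$'' hypothesis being met): we get $\int_{\gamma_0(z)}V_{\tt{Res}}\circ\mathcal E\,\dd z=O(|z\exp\nf{-z}\lambda|)$ and $\int_{\gamma_0(z)}(V-V_{\tt{Res}}-V_0)\circ\mathcal E\,\dd z=O(|\exp\nf{-z}\lambda|)$. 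The leading, purely $x$-dependent piece $\int_{\gamma_0(z)}V_0\circ\mathcal E\,\dd z$ is exactly $\int_z^{z_*}V_0(\exp u)\,\dd u$ up to the factor $\lambda$, and is subtracted off on the left-hand side of the claimed identity.

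It remains to pass from the \emph{model} characteristics (integrals along $\gamma_0(z)$) to the \emph{true} ones (integrals along $\gamma_R(z)$); this is the step I expect to be the main obstacle, and it is the analogue of item~(3) of Theorem~\ref{thm:dulac}. The idea is that $\gamma_R(z)$ and $\gamma_0(z)$ are close: both project to the same $x$-path under $\Pi\circ\mathcal E$, and their $w$-coordinates differ by $\mathcal D_R(z)-\mathcal D_0(z)$ together with the corresponding difference along the path, which by Theorem~\ref{thm:dulac}(3) is $o(|z\exp\nf{-z}\lambda|)$ (with the regular part absorbed). One then estimates $\int_{\gamma_R(z)}V\circ\mathcal E\,\dd z-\int_{\gamma_0(z)}V\circ\mathcal E\,\dd z$ by writing $V\circ\mathcal E$ along $\gamma_R$ as $V\circ\mathcal E$ along $\gamma_0$ plus a correction controlled by the $w$-Lipschitz constant of $V$ times the fibre-distance between the two lifts; because $V-V_0\in x^a y\germ{x,y}$ the relevant derivative $\partial_w(V\circ\mathcal E)=y\,\partial_y V\circ\mathcal E$ carries the decaying factor $\exp w$ which, along the leaves of $\mathcal E^*\fol[0]$, behaves like $\exp(\nf{-z}\lambda)$ up to the boundedness already used in Proposition~\ref{prop:model_dominant}; combining these bounds shows the discrepancy is again $o(|z\exp\nf{-z}\lambda|)$. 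Finally, since $V_0$ depends on $x$ alone, $\int_{\gamma_R(z)}V_0\circ\mathcal E\,\dd z=\int_{\gamma_0(z)}V_0\circ\mathcal E\,\dd z$ exactly, so the subtracted model term on the left coincides with the true one up to the already-absorbed error, and collecting everything yields the stated expansion. The delicate point throughout is to track uniformity in $w_*$ on the half-band furnished by Theorem~\ref{thm:dulac}(1), exactly as in the proof of that theorem.
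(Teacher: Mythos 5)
Your proposal follows essentially the same route as the paper: the integral formula is the geometric construction already in place (the path is unchanged by multiplying $X_R$ by the unit $U$), and the asymptotics come from decomposing $V=\nf 1U$, noting $V-V_0\in x^{a}y\germ{x,y}$, and invoking Proposition~\ref{prop:model_dominant} for the model characteristics. The comparison between the integrals along $\gamma_R$ and $\gamma_0$ that you flag as the main obstacle is precisely Theorem~\ref{thm:asymptotics} applied to $G=V-V_0$, which gives an error $O\left(\left|x\right|^{a}\right)=o\left(\left|z\exp\nf{-z}{\lambda}\right|\right)$ under condition~$\left(\condA\right)$; you may cite it directly rather than re-derive it, and note that the fibre distance $\left|w_R-w_0\right|$ along the path is controlled by a uniform constant via Lemma~\ref{lem:_growth_estim_FLF}, not by Theorem~\ref{thm:dulac}~(3) as your sketch suggests.
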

The integral subtracted on the left-hand side boils down to $\frac{z_{*}-z}{\lambda U\left(0,0\right)}$
if $U-U\left(0,0\right)$ belongs to $x^{a}y\germ{x,y}$. We mention
that this situation can always be enforced by preparing the vector
field:
\begin{lem}
\label{lem:preparation}Let $Z$ be a germ of a holomorphic vector
field near an isolated, non-degenerate singularity with ratio of eigenvalues
$\lambda\notin\ww R_{\geq0}$. Then there exists $a\in\ww N_{>0}$
satisfying~$\left(\condA\right)$ and a choice of local analytical
coordinates such that $Z=UX_{R}$ for two germs of a function satisfying
$R\in x^{a}y\germ{x,y}$ and $U-U\left(0,0\right)\in x^{a}y\germ{x,y}$
with $U\left(0,0\right)\neq0$.
\end{lem}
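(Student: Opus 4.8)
\emph{Plan.} I would bring $Z$ to the prescribed shape through a finite chain of explicit analytic coordinate changes, each respecting everything already arranged. Since $\lambda\notin\ww R_{\geq0}$ the ratio of eigenvalues is $\neq1$, so the linear part of $Z$ is diagonalizable; labeling the eigenvalues $\mu_{1},\mu_{2}$ so that $\mu_{1}/\mu_{2}=\lambda$, I would first diagonalize it into $\mu_{1}x\pp x+\mu_{2}y\pp y$. As neither $\lambda$ nor $1/\lambda$ is a positive integer, the Briot--Bouquet theorem yields a smooth separatrix tangent to each eigendirection; straightening the two of them onto $\left\{ x=0\right\} $ and $\left\{ y=0\right\} $ produces $Z=x\tilde P\pp x+y\tilde Q\pp y$ with $\tilde P\left(0\right)=\mu_{1}$ and $\tilde Q\left(0\right)=\mu_{2}$. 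In such coordinates the factorization $Z=UX_{R}$ is forced: necessarily $U=\tilde P/\lambda$ and $1+R=\lambda\tilde Q/\tilde P$, whence $U\left(0\right)=\mu_{2}\neq0$ and $R\left(0\right)=0$. Since $x^{a}y\germ{x,y}$ is an ideal, the two requirements $R\in x^{a}y\germ{x,y}$ and $U-U\left(0\right)\in x^{a}y\germ{x,y}$ are together equivalent to $\tilde P-\mu_{1}\in x^{a}y\germ{x,y}$ and $\tilde Q-\mu_{2}\in x^{a}y\germ{x,y}$, so it suffices to produce those two divisibilities.

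\emph{Normalizing the separatrices.} The restrictions $Z|_{\left\{ y=0\right\} }$ and $Z|_{\left\{ x=0\right\} }$ are one-dimensional germs with non-vanishing linear part, hence analytically linearizable; performing these linearizations (one in the variable $x$ only, the other in $y$ only, so they do not interact) gives $\tilde P\left(x,0\right)=\mu_{1}$ and $\tilde Q\left(0,y\right)=\mu_{2}$. Two further analytic changes, $\left(x,y\right)\mapsto\left(x,yv\left(x\right)\right)$ and $\left(x,y\right)\mapsto\left(xw\left(y\right),y\right)$ with $v,w$ units obtained by solving first-order linear differential equations whose right-hand members vanish at $0$ by the previous step, bring $\tilde Q\left(x,0\right)$ to $\mu_{2}$ and $\tilde P\left(0,y\right)$ to $\mu_{1}$; each of these is the identity on one of the two axes, hence preserves the earlier normalizations. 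One is then left with $\tilde P=\mu_{1}+y\tilde P_{1}$, $\tilde Q=\mu_{2}+y\tilde Q_{1}$ and $\tilde P_{1},\tilde Q_{1}\in x\germ{x,y}$.

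\emph{The exponent $a$ and clearing low $x$-degrees.} I would choose $a:=\max\left(1,\left\lceil -\re{1/\lambda}\right\rceil \right)$, a positive integer satisfying $\left(\condA\right)$; when $\lambda\in\ww R_{<0}$ is rational, say $\lambda=-p/q$ in lowest terms, one has $a=\left\lceil q/p\right\rceil \leq q$. It remains to upgrade $\tilde P_{1},\tilde Q_{1}\in x\germ{x,y}$ to $\tilde P_{1},\tilde Q_{1}\in x^{a}\germ{x,y}$, which I would achieve by induction on $k=1,\dots,a-1$: at step $k$ the change $\exp V_{k}$, with $V_{k}$ the solution of the homological equation $\left[\mu_{1}x\pp x+\mu_{2}y\pp y\,,\,V_{k}\right]=E_{k}$ ($E_{k}$ being the $x$-degree-$k$ obstruction), kills the $x$-degree-$k$ part of $\tilde P_{1}$ and $\tilde Q_{1}$. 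The divisors involved are $k\mu_{1}+\ell\mu_{2}$ with $\ell\geq1$; they never vanish (for all $k$ when $\lambda\notin\ww Q$, and, when $\lambda=-p/q$, because $0<k<q$ prevents $q\mid k$) and they grow linearly in $\ell$, so $V_{k}$ converges. Since $V_{k}$ vanishes on $\left\{ xy=0\right\} $ the change fixes both separatrices and keeps all previous normalizations, and since it has $x$-order $\geq k$ the terms it creates have strictly larger $x$-order. After the step $k=a-1$ one has $x^{a}\mid\tilde P_{1},\tilde Q_{1}$, and re-reading $U=\tilde P/\lambda$, $1+R=\lambda\tilde Q/\tilde P$ in the final chart concludes.

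\emph{Main obstacle.} The delicate point is the interplay between $\left(\condA\right)$ and the arithmetic of $\lambda$: one must check that the smallest admissible $a$ is still small enough for the homological equations at the levels $1,\dots,a-1$ to remain non-resonant, which in the only subtle case $\lambda=-p/q\in\ww Q_{<0}$ is exactly the inequality $\left\lceil q/p\right\rceil \leq q$ (a larger $a$ would turn the resonant monomial $x^{q}y^{p}$ into a genuine obstruction to $x^{a}\mid R$). A second, technical but essential, point is that the column-by-column clearing can be carried out analytically: at a fixed $x$-degree the divisors tend to infinity, so no small-divisor phenomenon appears and one never needs the (generically divergent) full analytic normalization of the singularity.
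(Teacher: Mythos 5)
Your argument is correct and is essentially the paper's: the paper likewise reduces the lemma to a finite-order Poincar\'e--Dulac normalization (straighten the two separatrices, then kill the $0$-jet in $y$ and the $\left(a-1\right)$-jet in $x$ of the perturbation, taking $a:=q$ when $\lambda=-\nf pq$), merely citing as ``well known'' the convergence of the finite-order transform that you establish explicitly via the column-by-column homological equations with divisors $k\mu_{1}+\ell\mu_{2}$ bounded away from zero. Your choice $a=\max\left(1,\left\lceil -\re{\nf 1{\lambda}}\right\rceil \right)\leq q$ instead of the paper's $a=q$ in the rational case is an immaterial variation, since both satisfy $\left(\condA\right)$ and keep every cleared column strictly below the first resonant $x$-degree $q$.
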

This lemma is plainly trivial when $\lambda\notin\ww R_{\leq0}$:
in that setting $Z$ is locally analytically conjugate to its linear
part, corresponding to $R=0$ and $U=\tx{cst}$. When $\lambda<0$
is irrational the vector field is formally linearizable and can be
put in the sought form for any finite order $a\in\ww N$, particularly
one such that $a+\frac{1}{\lambda}>0$. When $\lambda=-\frac{p}{q}$
for $p$ and $q$ positive co-prime integers, resonances may appear
and $Z$ may not be even formally orbitally linearizable. These resonances
correspond to pairs $\left(n,m\right)$ of integers belonging to $\left(q,p\right)\ww N$
(those for which $n\lambda+m=0$) in the Taylor expansion of $R$
and $U-U\left(0,0\right)$, and as such cannot appear for an index
$n$ lesser than $q$ or for $m=0$. It is thus possible to cancel
out formally the $\left(q-1\right)$-jet with respect to $x$ and
the $0$-jet with respect to $y$ of the given functions, meaning
we can take $a:=q$. Then $a+\frac{1}{\lambda}\geq0$. The fact that
this formal transform can always be chosen convergent is well known.

\subsection{Extension of the results to other singularities}

Condition~$\left(\condX\right)$ is satisfied except for some cases
when $\left\{ \lambda,\nf 1{\lambda}\right\} \cap\ww N\neq\emptyset$.
The heuristic is that these singularities may not possess sufficiently
many separatrices: the resonant node ($\lambda\neq0$) and the saddle-node
($\lambda=0$, exactly one nonzero eigenvalue) admit only one in general.
The former case is not very interesting since it corresponds to vector
fields which can be analytically reduced to polynomial vector fields
(Poincaré-Dulac normal forms~\cite{Dulac}) for which explicit computations
are easily carried out. The geometry of the foliation itself is quite
tame and completely understood. Save for some minor and technical
complications, the framework we present can be adapted to encompass
this case, although the trouble is not worth the induced lack of clarity
in the exposition.

The case of the saddle-node is richer. In~\cite{MarTey} we prove
that the Incompressibility~Theorem holds in that case too. When the
saddle-node is not divergent (\emph{i.e. }it admits two separatrices)
it can be brought in the form~$\left(\condX\right)$ and the Dulac
map admits an integral representation as in Proximity~Theorem~(1)
\begin{eqnarray*}
\mathcal{D}_{R}\left(z\right) & = & \mathcal{D}_{\mu x^{k}}\left(z\right)+\int_{\gamma_{R}\left(z\right)}\left(R-\mu x^{k}\right)\circ\mathcal{E}\,\frac{\dd z}{\exp\left(kz\right)}
\end{eqnarray*}
where $\left(k,\mu\right)\in\ww N_{>0}\times\ww C$ is the formal
invariant of the saddle-node and $\mathcal{D}_{\mu x^{k}}$ is the
Dulac map for the normal form which can be explicitly computed:
\begin{eqnarray*}
\mathcal{D}_{\mu x^{k}}\left(z\right) & = & w_{*}+\mu\left(z-z_{*}\right)+\frac{\exp\left(-kz\right)-\exp\left(-kz_{*}\right)}{k}\,.
\end{eqnarray*}
 When the singularity is a divergent saddle-node it is possible to
obtain an integral representation as well as a sectoral asymptotic
behavior. We refer also to~\cite{Lolo} for more details. Notice
again that the results are quantitative and hold whenever condition~$\left(\tt R\right)$
is satisfied.

\bigskip{}

After Seidenberg's reduction of its singularity a (germ of a) nilpotent
foliation possesses singular points either of non-degenerate or of
saddle-node type. As a consequence the work done here and in~\cite{Lolo}
is somehow sufficient to analyze more general Dulac maps, although
the difficulty of the task is huge. Yet there is a special case where
it is not necessary to perform the reduction of singularities to be
able to carry out some computations, which is in fact the most general
formulation of the framework we introduce here, corresponding to vector
fields in the form generalizing~$\left(\condX\right)$
\begin{eqnarray*}
X_{R} & = & X_{0}+RY
\end{eqnarray*}
 where:
\begin{itemize}
\item $X_{0}$ and $Y$ are commuting, generically transverse vector fields,
\item $Y$ admits a holomorphic first-integral $u$ with connected fibers,
\item $R\in u^{a}\germ{x,y}$ for some $a>0$.
\end{itemize}
Being given both a transverse disc $\Sigma$ meeting a common separatrix
of $X_{R}$ and $Y$ at some point $p_{*}$, and a transverse $\Sigma'$
corresponding to a trajectory $\left\{ u=u_{*}\right\} $ of $Y$,
we can define the Dulac map of $X_{R}$ joining $\Sigma$ to $\Sigma'$
by lifting paths through the fibration $\left(x,y\right)\mapsto u\left(x,y\right)$.
Then, with equality as multivalued maps on $\Sigma\backslash\left\{ p_{*}\right\} $,
we have the implicit relation
\begin{eqnarray*}
H_{0}\circ\mathcal{D}_{R} & = & H_{0}\circ\mathcal{D}_{0}\times\exp\int_{\gamma_{R}}R\tau
\end{eqnarray*}
where $\tau$ is some time-form of $X_{R}$ and $H_{0}$ a first-integral\footnote{This first-integral can be multivalued, as is the case in the main
situation studied here where $H_{0}\left(x,y\right)=x^{-\nf 1{\lambda}}y$.} of $X_{0}$. With little additional work the techniques used here
can be applied in that context, in particular regarding the shape
of the domain of $\mathcal{D}_{R}$ and, when applicable, its asymptotics.

\subsection{Structure of the article}

This paper only uses elementary techniques and is consequently self-contained. 
\begin{itemize}
\item Section~\ref{sec:geom_prop} is devoted to proving the Main Theorem.
\item This paper goes on with Section~\ref{sec:asymptotics} where the
explicit computation of characteristics $\int_{\gamma_{0}\left(z\right)}G\circ\mathcal{E}\,\dd z$
are performed for the model $\fol[0]$. Yet the core of the section
is the integral formula~(1) of the Proximity Theorem (Section~\ref{sub:Integral-representation})
and the study of the asymptotic deviation between $\int_{\gamma_{R}}G\circ\mathcal{E}\,\dd z\mbox{ }$
and $\int_{\gamma_{0}\left(z\right)}G\circ\mathcal{E}\,\dd z$. Immediate
consequences of this estimation are~(2) of the Proximity Theorem
and the best part of Theorem~\ref{thm:dulac_time}. 
\item We end this paper with the proof of Proposition~\ref{prop:model_dominant}
in Section~\ref{sub:prop}, completing Theorem~\ref{thm:dulac_time}.
\end{itemize}

\subsection{Notations and conventions}
\begin{itemize}
\item Let $K\subset\ww R^{m}$ be a compact set. A mapping $f~:~K\to\ww R^{n}$
will be said real-analytic if it is the restriction of a real-analytic
mapping on an open neighborhood of $K$. 
\item All the paths $\gamma$ we use throughout the paper are, for the sake
of simplicity, piecewise real-analytic maps from some compact interval
$\ww I$ into $\mathcal{U}$. Its starting point (\emph{resp.} ending
point) is written $\spt$ (\emph{resp. }$\ept$). It will always be
possible, though, to perturb $\gamma$ slightly so that it is real-analytic
everywhere when needed.
\item Take a foliation $\mathcal{F}$ defined on a domain $\mathcal{U}$
and some subset $A\subset\mathcal{U}$. The \textbf{saturation} $\sat[\mathcal{F}]A\subset\mathcal{U}$
is the union of all the leaves of $\mathcal{F}$ intersecting $A$.
\item The \textbf{restriction }of the foliation $\mathcal{F}$ to a sub-domain
$\mathcal{V}\subset\mathcal{U}$ is the foliation on $\mathcal{V}$,
written $\mathcal{F}\cap\mathcal{V}$, whose leaves are the connected
components of the trace on $\mathcal{V}$ of the leaves of $\mathcal{F}$.
\item For the sake of concision we make the convention that an object $\mathcal{X}$
hatted with a \emph{tilde} stands for its pull-back in logarithmic
coordinates $\tilde{\mathcal{X}}:=\mathcal{E}^{*}\mathcal{X}$ 
\begin{eqnarray*}
\mathcal{E}~:~\tilde{\mathcal{U}} & \longrightarrow & \hat{\mathcal{U}}\\
\left(z,w\right) & \longmapsto & \left(\exp z~,~\exp w\right)~.
\end{eqnarray*}

\item If $G\in\mathcal{O}\left(\mathcal{U}\right)\cap x^{a}\germ{x,y}$
is bounded we define its norm as 
\begin{eqnarray*}
\norm G & := & \sup_{\mathcal{U}}\left|\frac{G}{x^{a}}\right|~.
\end{eqnarray*}

\item We recall that the rectifying theorem for regular points $p$ of a
foliation $\fol[~]$ asserts the existence of a local analytic chart
$\psi~:~\left(V,p\right)\to\ww C^{2}$ such that $\psi_{*}\left(\fol[~]\cap V\right)$
is a foliation by lines of constant direction. We call $\left(\psi,V\right)$
a \textbf{rectifying chart}.\end{itemize}
\begin{defn}
\label{def_holonomy}Let $\Sigma\subset\mathcal{U}\backslash\left\{ \left(0,0\right)\right\} $
be a cross-section, transversal everywhere to $\fol$ (for short,
a \textbf{transverse} to $\fol$).
\begin{enumerate}
\item We introduce the groupoid $\grp$ of equivalence classes of paths
$\gamma$ tangent to $\fol$ with $\gamma_{\star}\in\Sigma$, up to
tangential homotopy (that is, homotopy within a given leaf of $\fol$)
with fixed end-points. We call it \textbf{the tangential groupoid
of $\fol$ relative to $\Sigma$}.
\item The tangential groupoid of $\fol$ relative to $\Sigma$ is naturally
endowed with a structure of a foliated complex surface, which can
be understood as the foliated universal covering of the saturation
$\sat{\Sigma}$ of $\Sigma$ by the leaves of $\fol$, that is the
locally biholomorphic, onto mapping
\begin{eqnarray*}
\sigma\,:\,\grp & \longrightarrow & \sat{\Sigma}\\
\gamma & \longmapsto & \ept\,.
\end{eqnarray*}

\item Fix a tangent path $\eta\in\grp[][\Sigma]$ and define the abstract
transversal set 
\begin{eqnarray*}
\Gamma_{\Sigma} & := & \left\{ \gamma\in\grp[][\Sigma]~:~\Pi\circ\gamma=\Pi\circ\eta\right\} ~.
\end{eqnarray*}
We call \textbf{holonomy map} of $\fol[R]$ associated to $\left(\eta,\Sigma\right)$
the locally biholomorphic map
\begin{eqnarray*}
\hol{\eta}~:~\Gamma_{\Sigma} & \longrightarrow & \Pi^{-1}\left(\Pi\left(\ept[\eta]\right)\right)\\
\gamma & \longmapsto & \ept[\gamma]~.
\end{eqnarray*}

\item The \textbf{Dulac map} of $\fol[R]$ associated with $\left(x_{*},y_{*}\right)$
is the holomorphic function defined on 
\begin{eqnarray*}
\Gamma_{*}: & = & \left\{ \gamma\in\grp[][\mathcal{U}\cap\left\{ y=y_{*}\,,\,x\neq0\right\} ]\,:\,\Pi\left(\ept\right)=x_{*}\right\} 
\end{eqnarray*}
by
\begin{eqnarray*}
\mathcal{D}_{R}\,:\,\Gamma_{*} & \longrightarrow & \Pi^{-1}\left(x_{*}\right)\\
\gamma & \longmapsto & \ept[\gamma]\,.
\end{eqnarray*}

\end{enumerate}
\end{defn}

\section{\label{sec:Incompressibility}Incompressibility of the leaves}

We recall that $\mathcal{U}$ is some polydisc centered at $\left(0,0\right)$
on which $R$ is bounded and holomorphic. We define $\hat{\mathcal{U}}:=\mathcal{U}\backslash\left\{ xy=0\right\} $
and $\tilde{\mathcal{U}}$ its universal covering through the exponential
map $\mathcal{E}$
\begin{eqnarray*}
\mathcal{E}~:~\tilde{\mathcal{U}} & \longrightarrow & \hat{\mathcal{U}}\\
\left(z,w\right) & \longmapsto & \left(\exp z~,~\exp w\right)~.
\end{eqnarray*}
The Incompressibility~Theorem asserts the leaves of the foliation
$\tilde{\mathcal{F}}:=\mathcal{E}^{*}\fol$ are simply-connected.
We recall briefly the argument of the proof for two reasons: on the
one hand because we need some basic estimates borrowed from said argument
for the rest of the article, on the other hand because it will make
apparent that the proof remains valid when $R$ is merely $C^{1}$
as a real function.

Write
\begin{eqnarray*}
\mathcal{E}^{*}X_{R} & = & \mathcal{E}^{*}X_{0}+R\circ\mathcal{E}\times\mathcal{E}^{*}\left(y\pp y\right)
\end{eqnarray*}
where
\begin{eqnarray*}
\mathcal{E}^{*}X_{0} & = & \lambda\pp z+\pp w\\
\mathcal{E}^{*}\left(y\pp y\right) & = & \pp w\,.
\end{eqnarray*}
The vector field $\mathcal{E}^{*}X_{R}$ is holomorphic and regular
on the infinite complex rectangle 
\begin{eqnarray*}
\tilde{\mathcal{U}} & := & \left\{ \left(z,w\right)\in\ww C^{2}~:~\re z<\ln\rho\,,\,\re w<\ln r\right\} \,.
\end{eqnarray*}
 It induces a foliation $\tilde{\mathcal{F}}$ transversal to the
fibers of 
\begin{eqnarray*}
\Pi\,:\,\left(z,w\right) & \longmapsto & z\,,
\end{eqnarray*}
 hence the leaf $\tilde{\mathcal{L}}_{p_{0}}$ is everywhere locally
the graph $\left\{ w=f\left(z\right)\right\} $ of some unique germ
of a holomorphic function defined in a neighborhood $V\left(p_{0}\right)$
of $\Pi\left(p_{0}\right)$. Because of this property the boundary
of $\tilde{\mathcal{L}}_{p_{0}}$ is included in the boundary of the
domain of study 
\begin{eqnarray*}
\partial\tilde{\mathcal{U}} & = & \left\{ \re z=\ln\rho\mbox{ or }\im w=\ln r\right\} \,.
\end{eqnarray*}
Incompressibility follows from the existence of (a family of) curves
included in $\tilde{\mathcal{L}}_{p_{0}}$ which project by $\Pi$
on line segments of constant direction. A cycle $\gamma$ within $\tilde{\mathcal{L}}_{p_{0}}$
will therefore be pushed along those curves, as if repelled by the
beam of a searchlight, resulting in a homotopy in $\tilde{\mathcal{L}}_{p_{0}}$
with a path $\hat{\gamma}$ whose projection bounds a region of empty
interior. The foliation obtained by restricting $\tilde{\mathcal{F}}$
to the $3$-space $\Pi^{-1}\left(\hat{\gamma}\left(\ww I\right)\right)$
is a $1$-dimensional foliation transverse to the fibers of $\Pi$.
Therefore the leaves are contractible: $\hat{\gamma}$ (and $\gamma$)
is homotopic in $\tilde{\mathcal{L}}_{p_{0}}$ to a point. 
\begin{defn}
(See Figure~\ref{fig:searchlight-beam}.) For $v\in\left\{ \re z<\ln\rho\right\} $,
$0<\delta<\pi$ and $\vartheta\in\ww S^{1}$ the domain 
\begin{eqnarray*}
S\left(v,\vartheta,\delta\right) & := & \left\{ z\,:\,\re z<\ln\rho\,,\,\left|\arg\left(z-v\right)-\arg\vartheta\right|<\delta\right\} 
\end{eqnarray*}
is called a (\textbf{searchlight})\textbf{ beam} of aperture $2\delta$,
direction $\vartheta$ and vertex $v$. If $v=\Pi\left(p_{0}\right)$
we say it is a \textbf{stability beam} when the real part of the lift
in $\tilde{\mathcal{L}}_{p_{0}}$, starting from $p_{0}$, of an outgoing
ray $t\geq0\mapsto v+t\theta$, with $\left|\arg\nf{\theta}{\vartheta}\right|<\delta$,
is decreasing.\end{defn}
\begin{rem}
This particularly means that the outgoing ray lifts \emph{completely}
in $\tilde{\mathcal{L}}_{p_{0}}$ as long as it does not cross $\left\{ \re z=\ln\rho\right\} $. 
\end{rem}
\begin{figure}[H]
\includegraphics[width=8cm]{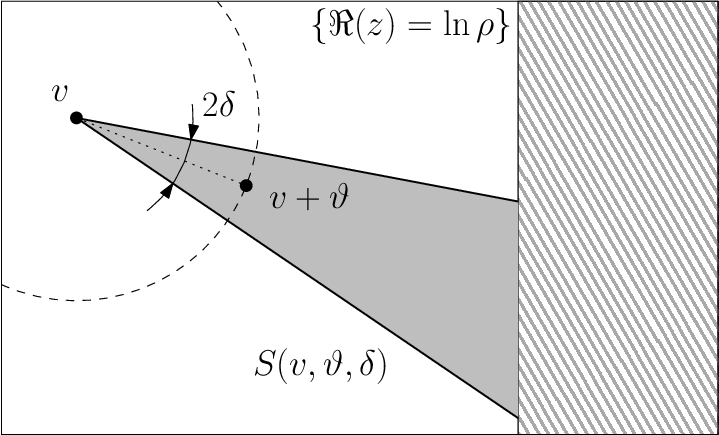}

\caption{\label{fig:searchlight-beam}A searchlight beam.}

\end{figure}

\begin{lem}
\label{lem:_growth_estim_FLF}There exist $\delta\in]0,\pi]$ and
$\vartheta\in\ww S^{1}$ such that for all $p_{0}=\left(z_{0},w_{0}\right)\in\tilde{\mathcal{U}}$
the  beam $S\left(z_{0},\vartheta,\delta\right)$ is a stability beam.
We can take
\begin{eqnarray*}
\vartheta & := & -\frac{\lambda}{\left|\lambda\right|}\\
\delta & \in & \left]0,\arccos\left(\sup_{\mathcal{U}}\left|R\right|\right)\right[\,,
\end{eqnarray*}
so that one can take $\delta$ as close to $\frac{\pi}{2}$ as one
wishes by sufficiently diminishing $\rho$ and $r$. Besides for any
integral curve of $\mathcal{E}^{*}X_{R}$ of the form $t\mapsto\left(z_{0}+t\theta,w\left(t\right)\right)$
with $t\geq0$, $\theta\in\ww S^{1}$ and $w\left(0\right)=w_{0}$
we have the estimate for $\re{\theta}\neq0$ 
\begin{eqnarray}
\left|w\left(t\right)-w_{0}-t\frac{\theta}{\lambda}\right| & \leq & \frac{\exp\left(a\re{z_{0}}\right)}{\left|\lambda\re{\theta}\right|a}\norm R\left|1-\exp\left(at\re{\theta}\right)\right|\label{eq:growth_estim_FLF}
\end{eqnarray}
and taking the limit $\re{\theta}\to0$
\begin{eqnarray}
\left|w\left(t\right)-w_{0}\pm t\frac{\ii}{\lambda}\right| & \leq & \frac{\exp\left(a\re{z_{0}}\right)}{\left|\lambda\right|}t\norm R~.\label{eq:growth_estim_FLF-1}
\end{eqnarray}
\end{lem}
\begin{proof}
The lift in $\tilde{\mathcal{F}}$ of a germ of a ray $z\left(t\right)=z_{0}+\theta t$,
with $\theta\in\ww S^{1}$ and $t\geq0$, starting from $p_{0}$ is
obtained as the solution to
\begin{eqnarray*}
\frac{\dot{w}}{\dot{z}}\left(t\right) & = & \frac{1+R\circ\mathcal{E}\left(z\left(t\right),w\left(t\right)\right)}{\lambda}\,\,\,\,\,,\,w\left(0\right)=w_{0}\,,
\end{eqnarray*}
that is
\begin{eqnarray}
\dot{w}\left(t\right) & = & \frac{\theta}{\lambda}\left(1+R\circ\mathcal{E}\left(z_{0}+\theta t,w\left(t\right)\right)\right)\,.\label{eq:variation_w}
\end{eqnarray}
The function $t\mapsto\varphi\left(t\right):=\re{w\left(t\right)}$
is therefore solution to the differential equation 
\begin{eqnarray}
\dot{\varphi} & = & \re{\frac{\theta}{\lambda}\left(1+R\circ\mathcal{E}\circ\left(z,w\right)\right)}\,,\label{eq:variation_modulus_FLF}
\end{eqnarray}
which particularly means that
\begin{eqnarray*}
\left|\dot{\varphi}\left(t\right)-\re{\frac{\theta}{\lambda}}\right| & \leq & \frac{\exp\left(a\re{z_{0}}\right)}{\left|\lambda\right|}\norm R\exp\left(at\re{\theta}\right)<\frac{\rho^{a}}{\left|\lambda\right|}\norm R\,.
\end{eqnarray*}
Exploiting the cruder estimate by taking $\theta\in\vartheta\exp\left(\ii\left[-\delta,\delta\right]\right)$
we derive
\begin{eqnarray}
\dot{\varphi}\left(t\right) & \leq & -\frac{1}{\left|\lambda\right|}\left(\cos\delta-\rho^{a}\norm R\right)<0\,.\label{eq:derivative_real_w}
\end{eqnarray}
 Since $\varphi\left(0\right)<\ln r$ it follows that $\varphi\left(t\right)<\ln r$
as long as $\re{\left(z\left(t\right)\right)}<\ln\rho$, which is
our first claim.

Integrating both sides of the estimate yields
\begin{eqnarray*}
\left|\re{w\left(t\right)-w_{0}-t\frac{\theta}{\lambda}}\right| & < & \frac{\exp\left(a\re{z_{0}}\right)}{\left|\lambda\re{\theta}\right|a}\norm R\left|1-\exp\left(at\re{\theta}\right)\right|\,.
\end{eqnarray*}
The study we just performed can be carried out in just the same way
for the imaginary part of $w$, yielding
\begin{eqnarray*}
\left|\im{w\left(t\right)-w_{0}-t\frac{\theta}{\lambda}}\right| & < & \frac{\exp\left(a\re{z_{0}}\right)}{\left|\lambda\re{\theta}\right|a}\norm R\left|1-\exp\left(at\re{\theta}\right)\right|\,,
\end{eqnarray*}
proving the sought estimate.\end{proof}
\begin{rem}
\label{rem_roughness}We should stress that the <<roughness>> of
$\partial\Omega_{p_{0}}$ is controlled by the aperture $2\delta$
of the stability beam, which can be taken as close to $\pi$ as one
wishes, and by the direction $\vartheta$ (which is that of the model).
This is a kind of <<conic-convexity>> which forbids $\partial\Omega_{p_{0}}$
to be too wild. In fact the closer $\re z$ is to $-\infty$, the
more $\Omega_{p_{0}}$ looks like $\left\{ \re{w_{0}+\frac{z-z_{0}}{\lambda}}<\ln r\,,\,\re z<\ln\rho\right\} $
near $z$.

\begin{figure}[H]
\includegraphics[width=8cm]{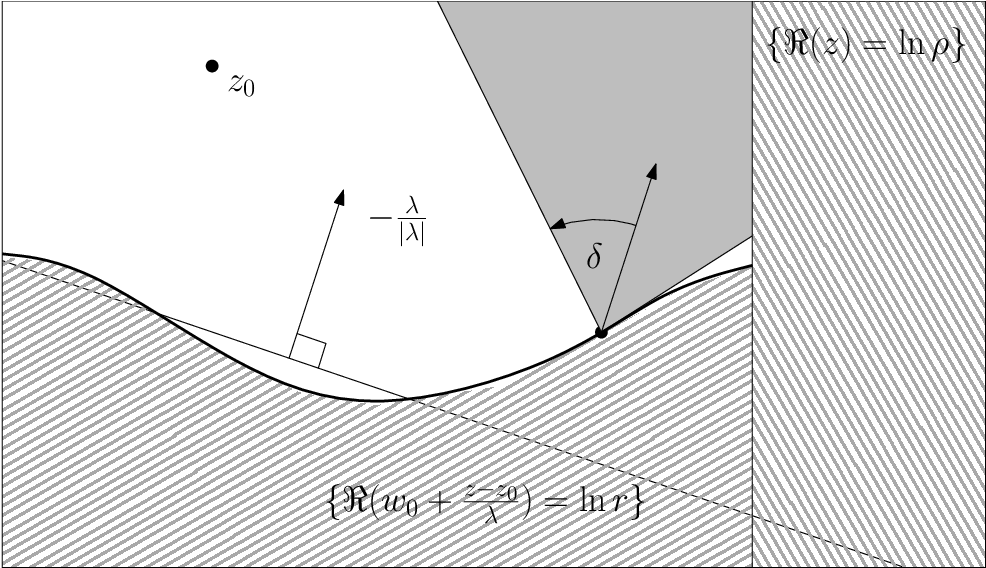}
\end{figure}

\end{rem}
The previous remark can be made more quantitative if we explicitly
allow $\delta$ to vary. In fact we can sharpen the estimate from
the previous lemma by replacing in~(\ref{eq:derivative_real_w})
$\rho^{a}\norm R$ with $M\left(z\left(t\right),w\left(t\right)\right)$
where
\begin{eqnarray*}
M\left(z,w\right) & := & \exp\left(a\re z\right)\left(K_{0}\exp\re z+K_{1}\exp\re w\right)~,\\
K_{0} & := & \sup_{\mathcal{U}}\left|\frac{\partial x^{-a}R}{\partial x}\right|~,\\
K_{1} & := & \sup_{\mathcal{U}}\left|\frac{\partial x^{-a}R}{\partial y}\right|~,
\end{eqnarray*}
for
\begin{eqnarray*}
\left|R\circ\mathcal{E}\left(z,w\right)\right| & \leq & M\left(z,w\right)~.
\end{eqnarray*}
By reducing slightly the size of $\mathcal{U}$ we can assert that
$\sup_{\mathcal{U}}M<1$. The function $M$ depends implicitly on
the parameter $\left(a,K_{0},K_{1}\right)\in\ww N\times\ww R_{\geq0}^{2}$.
\begin{cor}
\label{cor:biggest_domain}Define the functional space 
\begin{eqnarray*}
\ww E & := & \left\{ \epsilon\in C^{0}\left(\ww R_{\geq0}\to\ww R_{>0}\right)~:~\sup_{\ww R_{\geq0}}\epsilon\leq\frac{1-M\left(\ln\rho,\ln r\right)}{\left|\lambda\right|}\mbox{ and }\lim_{t\to\infty}\int_{0}^{t}\epsilon\left(s\right)\dd s=+\infty\right\} ~.
\end{eqnarray*}
For $\left(z,w\right)\in\tilde{\mathcal{U}}$, $\epsilon\in\ww E$
and $t\in\ww R$ set 
\begin{eqnarray*}
\delta_{\epsilon}\left(t,z,w\right) & := & \arccos\left(\epsilon\left(\left|t\right|\right)\left|\lambda\right|+M\left(z,w\right)\right)
\end{eqnarray*}
 (the bound enforced on $\epsilon$ guarantees that $\epsilon\left(\ww R_{\geq0}\right)\left|\lambda\right|+M\left(\tilde{\mathcal{U}}\right)$
is included in $\left[0,1\right]$). Fix $p_{0}=\left(z_{0},w_{0}\right)\in\tilde{\mathcal{U}}$.
As long as the image of the path $z_{\epsilon}^{\pm}~:~t\in\pm\ww R_{\geq0}\mapsto z_{\epsilon}^{\pm}\left(t\right)$,
solution of
\begin{eqnarray*}
\begin{cases}
\frac{\dd{z_{\epsilon}^{\pm}}}{\dd t}\left(t\right) & =\vartheta\exp\left(\pm\ii\delta_{\epsilon}\left(t~,~z_{\epsilon}^{\pm}\left(t\right)~,~w_{0}-\int_{0}^{\left|t\right|}\epsilon\left(s\right)\dd s\right)\right)\\
z_{\epsilon}^{\pm}\left(0\right) & =z_{0}
\end{cases} &  & ~,
\end{eqnarray*}
stays in $\left\{ z~:~\re z<\ln\rho\right\} $ then it is contained
in $\Omega_{p_{0}}$. Moreover if $\re{\lambda}>0$ each $z_{\epsilon}^{\pm}$
is a solution for all $t\in\pm\ww R_{\geq0}$ and
\begin{eqnarray*}
\left|\tan\arg\frac{\dd{z_{\epsilon}^{\pm}}}{\dd t}\left(t\right)\right| & \sim_{t\to\pm\infty} & \frac{\sqrt{1-\left(\epsilon\left(\left|t\right|\right)\left|\lambda\right|\right)^{2}}}{\epsilon\left(\left|t\right|\right)\left|\lambda\right|}~.
\end{eqnarray*}

\end{cor}
The estimate on $\tan\arg\frac{\dd{z_{\epsilon}^{\pm}}}{\dd t}$ as
$t$ goes to $\pm\infty$ controls the asymptotic direction of the
image curve. If $\epsilon\to0$ then the curve gets arbitrarily tangent
to the imaginary axis.
\begin{proof}
Assume $t\geq0$. By construction, and similarly as in~(\ref{eq:variation_modulus_FLF}),
the corresponding solution $w_{\epsilon}^{+}$ satisfies
\begin{eqnarray*}
\frac{\dd{\re{w_{\epsilon}^{+}}}}{\dd t}\left(t\right) & \leq & -\frac{1}{\left|\lambda\right|}\left(\cos\delta_{\epsilon}\left(t,z_{\epsilon}^{+}\left(t\right),w_{0}-\int_{0}^{t}\epsilon\left(s\right)\dd s\right)-M\left(z_{\epsilon}^{+}\left(t\right),w_{0}-\int_{0}^{t}\epsilon\left(s\right)\dd s\right)\right)\\
 &  & =-\epsilon\left(t\right)<0~.
\end{eqnarray*}
 The rest of the proof is clear.\end{proof}
\begin{defn}
We call \textbf{maximal stability beam} of parameter $\left(a,K_{0},K_{1}\right)$
based at $p_{0}$ the open set included in $\Omega_{p_{0}}$
\begin{eqnarray*}
\mathcal{S}_{\max}\left(p_{0}\right) & := & \mbox{connected component of }p_{0}\mbox{ in }\left(\left\{ \re z<\ln\rho\right\} \cap\bigcup_{\epsilon\in\ww E}z_{\epsilon}^{\pm}\left(\pm\ww R_{\geq0}\right)\right)~.
\end{eqnarray*}

\end{defn}
By taking $\frac{R}{x^{a}}$ linear we see that a maximal stability
beam is optimal for all $\fol[R]$ with same corresponding parameter
$\left(a,K_{0},K_{1}\right)$. Notice that $\partial\mathcal{S}_{\max}\left(p_{0}\right)$
is the limit of curves parameterized by $z_{\epsilon}^{\pm}$ with
$\epsilon\to0$ in $\ww E$. In particular its slope gets asymptotically
tangent to the imaginary axis.

\section{\label{sec:geom_prop}Geometry of $\Omega_{R}$ and of its boundary}

We continue to write $\mathcal{D}_{R}$ for the Dulac map of $\fol$
associated to some couple $\left(x_{*},y_{*}\right)$ expressed in
the coordinates $\mathcal{E}$ (\emph{i.e. }understood as a locally
holomorphic function of $\left(z,w\right)$). We recall that for any
$p_{0}\in\tilde{\mathcal{U}}$ the leaf $\tilde{\mathcal{L}}_{p_{0}}$
of $\tilde{\mathcal{F}}$ passing through $p_{0}$ projects on 
\begin{eqnarray*}
\Omega_{p_{0}} & := & \Pi\left(\tilde{\mathcal{L}}_{p_{0}}\right)\,.
\end{eqnarray*}

\begin{prop}
\label{prop:th_2}We fix a preimage $\left(z_{*},w_{*}\right)\in\mathcal{E}^{-1}\left(x_{*},y_{*}\right)$.
\begin{enumerate}
\item The Dulac map is holomorphic on the open, simply connected set 
\begin{eqnarray*}
\Omega & :=\left\{ z\in\ww C\,:\,\left(z,w_{*}\right)\in\tilde{\mathcal{U}}\mbox{ and }z_{*}\in\Omega_{\left(z,w_{*}\right)}\right\} \,.
\end{eqnarray*}
We write $\Omega_{*}$ the connected component containing $z_{*}$.
As Riemann surfaces $\Omega$ and $\Omega_{R}$ are isomorphic.
\item The boundary $\partial\Omega$ is a locally finite union of piecewise
real-analytic curves. 
\item If $\re{\lambda}\geq0$ then $\Omega=\Omega_{*}$ and $\adh{\Omega_{*}}\cap\left\{ \re z=\ln\rho\right\} $
is a nonempty line segment. For every $N\in\ww N_{>0}$ there exists
$r\geq r'>0$ such that this line segment contains at least $\ln\rho+\ii\im{z_{*}}+\left[-\pi\ii N,\pi\ii N\right]$
whenever $\re{w_{*}}<\ln r'$.
\item If $\re{\lambda}<0$ there exists $0<\rho'\leq\rho$ and $0<r'\leq r$
depending only on $a$, $\lambda$ and $\norm R$ such that for every
$\re{z_{*}}<\ln\rho'$, $\re{w_{*}}<\ln r'$ and $N\in\ww N_{>0}$
the domain $\Omega_{*}$ contains some infinite half-band $\left\{ \re z\leq\kappa'\,,\,\left|\im{z-z_{*}}\right|\leq\pi N\right\} $
with $\kappa'\leq\re{z_{*}}$ depending only on $N$, $a$, $\lambda$
and $\norm R$.
\end{enumerate}
\end{prop}
\begin{rem*}
In~(4) one can take $\rho=\rho'$ and $r=r'$ when $\lambda$ is
real. Also when $a>0$ one can guarantee that $r=r'$.
\end{rem*}
The rest of the section is devoted to proving the proposition. In
doing so we build an explicit tangent path linking $\left(z_{0},w_{*}\right)$
to $\left(z_{*},\mathcal{D}_{R}\left(z_{0}\right)\right)$, see Proposition~\ref{prop:integ_path}
below, which will serve in the next section to establish the asymptotic
expansion of the Dulac map through the integral formula of Corollary~\ref{cor:th_1}.
For saddle-like singularities the paths are built in the same fashion
as in \cite{IlYaRus,IlyaDu,IlYakov}. We underline right now the fact
that the projection $\tilde{\gamma}$ of that path through $\Pi$
does not depend on $w_{*}$, but only on $z_{0}$, $a$, $\lambda$,
$\rho$, $\norm R$ and $z_{*}$.

\subsection{The integration path}

We write $p_{0}:=\left(z_{0},w_{*}\right)$. If $\re{\lambda}\geq0$
and $z_{0}\in\Omega$ then both stability beams $S\left(z_{0},\vartheta,\delta\right)$
and $S\left(z_{*},\vartheta,\delta\right)$ are included in $\Omega_{p_{0}}$
and their intersection $W$ is non-empty. Therefore $z_{0}$ can be
linked to $z_{*}$ in $\Omega_{p_{0}}$ by following first a ray segment
of $S\left(z_{0},\vartheta,\delta\right)$ from $z_{0}$ to some point
$z_{1}$ in $W$, then from this point backwards $z_{*}$ along a
ray segment of $S\left(z_{*},\vartheta,\delta\right)$, as illustrated
in Figure~\ref{fig:integration_path-_node} below.

\begin{figure}[H]
\includegraphics[width=8cm]{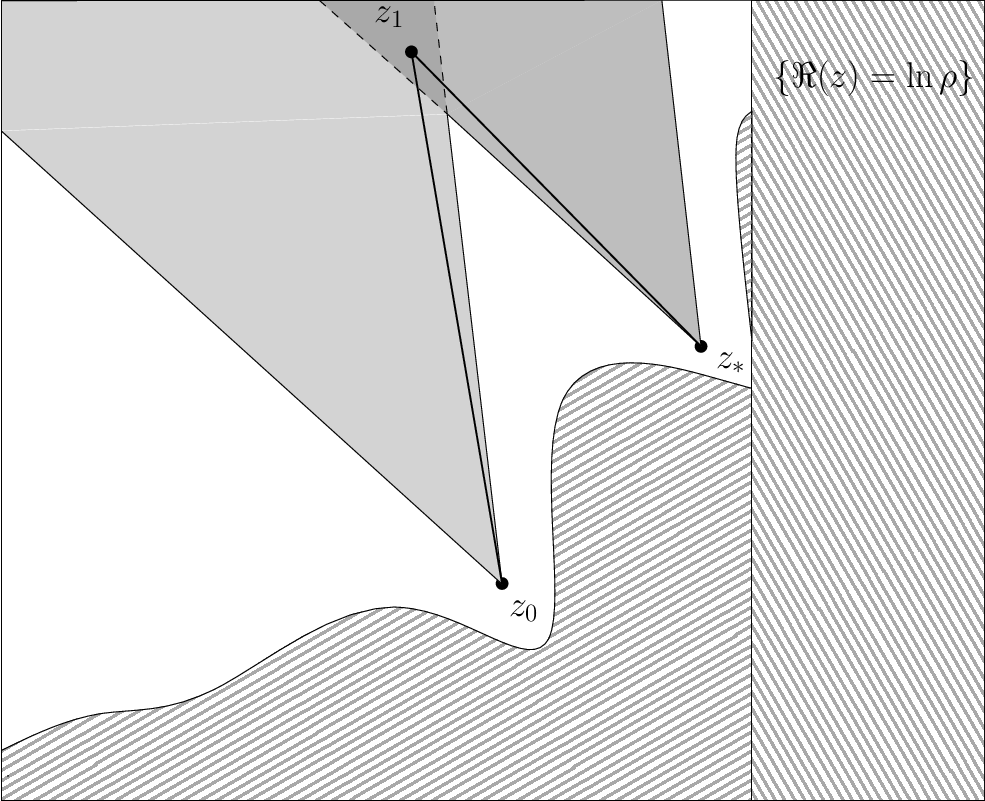}

\caption{\label{fig:integration_path-_node}The path of integration $\tilde{\gamma}$
when $\protect\re{\lambda}\geq0$.}
\end{figure}

On the contrary if $\re{\lambda}<0$ the candidate region $W$ could
be beyond $\left\{ \re z=\ln\rho\right\} $. The construction must
therefore be adapted.
\begin{prop}
\label{prop:integ_path}Assume $\re{\lambda}<0$. There exists $\kappa\in\ww R$
depending only on $a$, $\lambda$, $\norm R$ and $z_{*}$ for which
the following property holds: for every $z_{0}\in\Omega$ one can
choose a path $\tilde{\gamma}\,:\,z_{0}\to z_{*}$ with image inside
$\Omega_{p_{0}}$ in such a way that $\tilde{\gamma}$ is a polygonal
line of ordered vertexes $\left(z_{0},z_{1},z_{2},z_{3},z_{*}\right)$
with (we refer also to Figure~\ref{fig:integration_path} below)
\begin{itemize}
\item $z_{1}=\max\left\{ \kappa,\re{z_{0}}\right\} +\ii\im{z_{0}}$,
\item $\arg\left(z_{2}-z_{1}\right)=\arg\vartheta\pm\delta$,
\item $\re{z_{2}}=\re{z_{3}}<\ln\rho$ and $\left|\im{z_{3}-z_{2}}\right|\leq\left|\im{z_{0}-z_{*}}\right|+\tan\left(\left|\arg\vartheta\right|+\delta\right)\left(\ln\rho-\kappa\right)$,
\item $\arg\left(z_{*}-z_{3}\right)=\arg\vartheta\pm\delta$~.
\end{itemize}
\end{prop}
\begin{figure}[H]
\includegraphics[width=8cm]{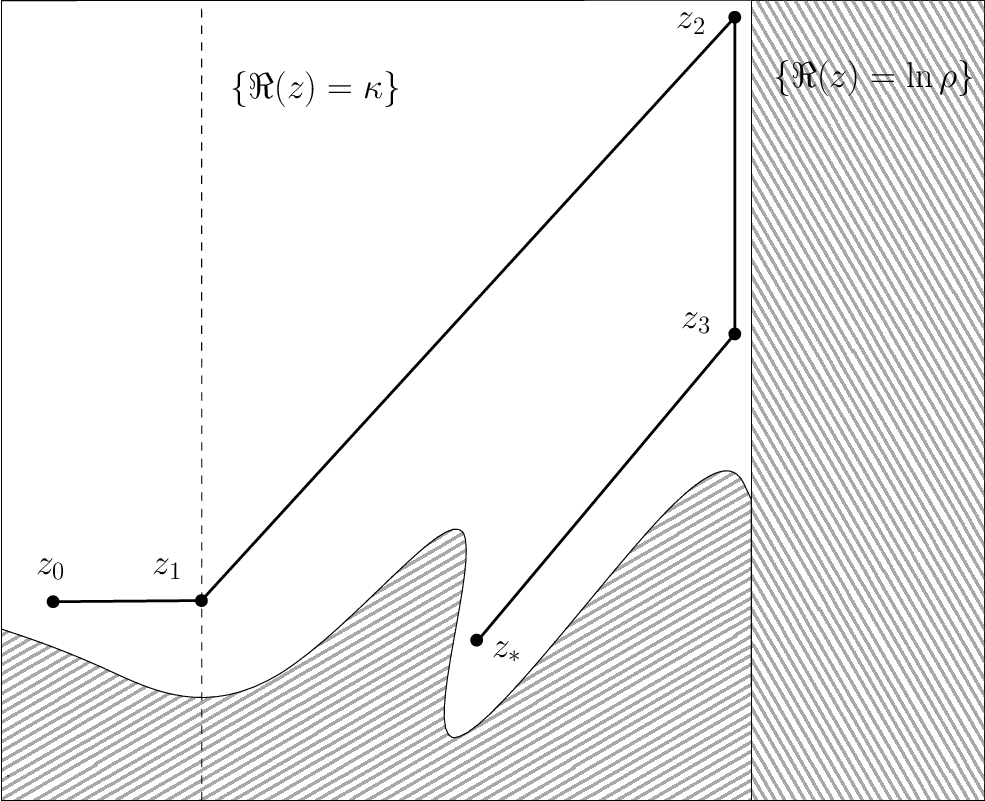}

\caption{\label{fig:integration_path}The path of integration $\tilde{\gamma}$
when $\protect\re{\lambda}<0$.}
\end{figure}

\begin{rem}
We could have made a similar construction without the use of $\kappa$
(\emph{i.e.} by joining directly $z_{0}$ to some $z_{2}$), but we
need it in order to obtain uniform bounds with respect to $\im{z_{0}-z_{*}}$
in the next section.\end{rem}
\begin{proof}
Write 
\begin{eqnarray}
\vartheta_{\pm} & := & \vartheta\exp\left(\pm\ii\delta\right)\,.\label{eq:theta_mp}
\end{eqnarray}
Either $\re{\vartheta_{+}}$ (if $\im{\lambda}\leq0$) or $\re{\vartheta_{-}}$
(if $\im{\lambda}\geq0$) is positive, let us assume for the sake
of example that $\im{\lambda}\leq0$, the other case being similar
in every respect. There exists $\kappa\leq\re{z_{*}}$ such that the
ray segment $\left(z_{0}+\ww R_{\geq0}\right)\cap\left\{ \re z<\kappa\right\} $
is included in $\Omega_{p_{0}}$. Obviously $\kappa$ depends only
on $a,\,\lambda$, $\norm R$ and $z_{*}$. We take for $\tilde{\gamma}$
the polygonal line of ordered vertexes $\left(z_{0},z_{1},z_{2},z_{3},z_{*}\right)$
built in the following fashion.
\begin{itemize}
\item If $z_{0}\in\left\{ \re z<\kappa\right\} $ then the partial ray $\left(z_{0}+\vartheta_{0}\ww R_{\geq0}\right)\cap\left\{ \re z<\kappa\right\} $,
included in $\Omega_{p_{0}}$ according to Lemma~\ref{lem:_growth_estim_FLF},
leaves the region at some point $z_{1}$ with $\re{z_{1}}=\kappa$.
Otherwise we set $z_{1}:=z_{0}$.
\item Both rays $\left\{ z_{1},z_{*}\right\} +\vartheta_{+}\ww R_{\geq0}$,
included in $\Omega_{p_{0}}$, intersect the line $\left\{ \re z=\ln\rho-\epsilon\right\} $
in, respectively, $z_{2}$ and $z_{3}$ for $\epsilon>0$ very small. 
\end{itemize}
The line segment $\left[z_{2},z_{3}\right]$, and therefore the whole
image of $\tilde{\gamma}$, is included in $\Omega_{p_{0}}$ thanks
to the next lemma:
\begin{lem}
\label{lem:boundary_z_FLF}If $\re{\lambda}\leq0$ then $\adh{\Omega_{p_{0}}}\cap\left\{ \re z=\ln\rho\right\} $
is a nonempty line segment. If $\lambda\notin\ww R_{\geq0}$ there
exists $r,~\rho>0$ such that the same property holds.\end{lem}
\begin{proof}
In the case $\re{\lambda}\leq0$ we have $\max\left\{ \re{\vartheta\exp\left(\pm\ii\delta\right)}\right\} >0$;
say, for the sake of example, that $\vartheta_{+}:=\vartheta\exp\left(\ii\delta\right)$
has positive real part. If $\lambda$ is not a positive number this
property can be secured by decreasing $\rho,~r$ and taking $\delta$
as close to $\frac{\pi}{2}$ as need be. Take a path $\Gamma$ connecting
two points of $\adh{\Omega_{p_{0}}}\cap\left\{ \re z=\ln\rho\right\} $
(which is a non-empty set) within $\Omega_{p_{0}}$. Let $I$ be the
line segment of $\left\{ \re z=\ln\rho\right\} $ joining those points.
The ray $p-\vartheta_{+}\ww R_{\geq0}$ emitted from some $p\in I$
separates $\left\{ \re z\leq\ln\rho\right\} $ into two connected
regions. Since $\Gamma$ starts from one of them and lands in the
other one, the curve must cross the ray at some point $q\in\Omega_{p_{0}}$.
The ray $q+\vartheta_{+}\ww R_{\geq0}$ is included in $\Omega_{p_{0}}$
since it lies within a stability beam, while it contains $p$ in its
adherence.
\end{proof}
\end{proof}
We deduce the following characterization.
\begin{cor}
\label{cor:carac_definition_Dulac}Assume $\re{\lambda}<0$. The following
propositions are equivalent.
\begin{enumerate}
\item $z_{0}\in\Omega$,
\item $z_{*}\in\Omega_{p_{0}}$,
\item there exists $\varepsilon>0$ such that for all $0<\epsilon\leq\varepsilon$
the points $z_{2}$ and $z_{3}$ built in Proposition~\ref{prop:integ_path}
can be taken with $\re{z_{2}}=\re{z_{3}}=\ln\rho-\varepsilon$, meaning
$\left[z_{2},z_{3}\right]\subset\Omega_{p_{0}}$.
\end{enumerate}
\end{cor}
\begin{proof}
$\left(1\right)\Leftrightarrow\left(2\right)$ is the definition of
$\mathcal{D}_{R}$ and $\Omega$ while $\left(3\right)\Rightarrow\left(2\right)$
is clear. The converse follows from the previous proposition and its
proof, particularly Lemma~\ref{lem:boundary_z_FLF}.
\end{proof}

\subsection{The dual searchlight sweep}
\begin{lem}
\label{lem:dual_searchlight}If $\re{\lambda}\geq0$ the beam $S\left(z_{0},-\vartheta,\delta\right)$
is included in $\Omega$ for any $z_{0}\in\Omega$. If $\re{\lambda}<0$
 beam $S\left(z_{*},-\vartheta,\delta\right)$ is included in $\Omega_{*}$.
\end{lem}
For any $z\in S\left(z_{0},-\vartheta,\delta\right)$ we can link
$\left(z,w_{*}\right)$ to some point $\left(z_{0},w\right)$ with
$\re w\leq\re{w_{*}}$ by lifting in $\tilde{\mathcal{F}}$ the line
segment $\left[z,z_{0}\right]$. Therefore the lemma is trivial in
the case where $\re{\lambda}<0$. On the contrary when $\re{\lambda}\geq0$
the lemma is a consequence of the next one.
\begin{lem}
Assume that $\re{\lambda}\geq0$, $z_{0}\in\Omega$ and let $\eta:=\re{w_{*}}$.
Then for any other choice of $w_{*}$ with real part lesser or equal
to $\eta$ we still have $z_{0}\in\Omega$ as well.\end{lem}
\begin{proof}
We set up a connectedness argument. Let $B:=\left\{ w_{*}\,:\,\re{w_{*}}\leq\eta\right\} $
and $A:=\left\{ w_{*}\,:\,w_{*}\in B\mbox{ and }z_{0}\in\Omega\right\} $.
By assumption $A$ is not empty, and it is open in $B$ for the same
reason that $\Omega$ is open. More precisely any $w_{*}\in A$ admits
a neighborhood $V$ in $B$ such that the image of $\tilde{\gamma}$
is included in $\Omega_{\left(z_{0},w\right)}$ for every $w\in V$.
Let now a sequence $\left(w_{n}\right)_{n\in\ww N}\subset A$ converge
towards $w_{\infty}\in B$. If $\re{\lambda}\geq0$ then the image
of $\tilde{\gamma}$ is included in the union of the two stability
beams $S:=S\left(z_{0},\vartheta,\delta\right)\cup S\left(z_{*},\vartheta,\delta\right)$
which are themselves included in every $\Omega_{\left(z_{0},w_{n}\right)}$.
Because the real analytic curves defining $\partial\Omega_{\left(z_{0},w\right)}$
vary continuously when $w$ does we have $S\subset\Omega_{\left(z_{0},w_{\infty}\right)}$
also. In particular $z_{*}\in\Omega_{\left(z_{0},w_{\infty}\right)}$
and $z_{0}$ belongs to $\Omega_{*}$ for $w_{*}:=w_{\infty}$. The
former property implies in turn that $A$ is a closed subset of $B$
and as such spans the whole region $B$. \end{proof}
\begin{rem}
When $\re{\lambda}<0$ the above argument does not work since the
image of $\tilde{\gamma}$ must sometimes leave the (adherence of
the) union of stability beams emitted by $z_{1}$ and $z_{*}$ (when
it visits $\left[z_{2},z_{3}\right]$). Nothing guarantees that the
limiting domain $\Omega_{\left(z_{0},w_{\infty}\right)}$ does not
disconnect $\left[z_{2},z_{3}\right]$ at some point. Since this lemma
will be used to show simple-connectedness of $\Omega_{*}$ in the
node-like case, we will need another argument in the saddle-like case.
\end{rem}

\subsection{Proof of Proposition~\ref{prop:th_2}~(1)}

We first mention that $\Omega$ is clearly open since if one can link
a point $\left(z,w_{*}\right)$ to $\left(z_{*},\mathcal{D}_{R}\left(z\right)\right)$
with a tangent path $\gamma$, whose image is included in the open
set $\tilde{\mathcal{U}}$, then surely this is again the case for
a neighborhood of $z$ (consider a covering of the compact image of
$\gamma$ by rectifying charts). Take now a simple loop $\Gamma_{0}\subset\Omega$
bounding a relatively compact, simply-connected domain $W_{0}$ and
show $W_{0}\subset\Omega$ (without loss of generality we may assume
that $\Gamma_{0}$ is smooth and real-analytic). If $\re{\lambda}\geq0$
Lemma~\ref{lem:dual_searchlight} proves precisely that fact, since
\begin{eqnarray*}
W_{0} & \subset & \bigcup_{z\in\Gamma_{0}}S\left(z,-\vartheta,\delta\right)\subset\Omega~.
\end{eqnarray*}
Assume next that $\re{\lambda}<0$ and denote by $\tilde{\gamma}_{z}$
the integration path built in Proposition~\ref{prop:integ_path}
for $z_{0}:=z$, while $\left(z\right)_{j}$ stand for the corresponding
vertex $z_{j}$ of the polygonal line. We want to show that $\tilde{\gamma}_{z}$
can be lifted in $\tilde{\mathcal{F}}$ through $\Pi$ starting from
$\left(z,w_{*}\right)$ all the way to $\left(z_{*},\mathcal{D}_{R}\left(z\right)\right)$,
when $z\in W_{0}$. Along the line segments $\left[z,\left(z\right)_{1}\right]$
and $\left[\left(z\right)_{1},\left(z\right)_{2}\right]$ the real
part of the $w$-component of the lift is decreasing, therefore $\tilde{\gamma}_{z}$
can be lifted at least until $\left(z\right)_{2}$ for every $\re z<\ln\rho$. 

From now on we work in the $3$-dimensional real slice 
\begin{eqnarray*}
\mathcal{R} & := & \left\{ \left(z,w\right)~:~\re z=\ln\rho-\epsilon,~\re w<\ln r\right\} 
\end{eqnarray*}
for fixed $\epsilon>0$, which we identify to
\begin{eqnarray*}
\ww R\times\ww C & = & \left\{ \left(t,w\right)~:~t=\im z,~\left(z,w\right)\in\mathcal{R}\right\} ~.
\end{eqnarray*}
We recall that we write $\tilde{\mathcal{F}}\cap\mathcal{R}$ the
restriction of $\tilde{\mathcal{F}}$ to $\mathcal{R}$, which is
a $1$-dimensional real-analytic regular foliation everywhere transverse
to the fibers of $\Pi|_{\mathcal{R}}$ (the lines $\left\{ t=\tx{cst}\right\} $).
\begin{lem}
\label{lem:surface_W2}We refer to Figure~\ref{fig:omega_sconnected}.
Take a relatively compact, simply connected domain $W\subset\left\{ \re z<\ln\rho-\epsilon\right\} $
with smooth real-analytic boundary $\Gamma$. Consider the correspondence
map 
\begin{eqnarray*}
\hol{0\to2}~:~\adh W & \longrightarrow & \mathcal{R}\\
z & \longmapsto & \hol{\left[z,\left(z\right)_{1},\left(z\right)_{2}\right]}\left(z\right)
\end{eqnarray*}
where $\hol{\gamma}$ denotes the holonomy of $\tilde{\mathcal{F}}$
associated to $\left(\gamma,\left\{ w=w_{*}\right\} \right)$ as in
Definition~\ref{def_holonomy}. 
\begin{enumerate}
\item $\hol{0\to2}$ is a real-analytic, open and injective mapping.
\item Set
\begin{eqnarray*}
\Gamma_{2} & := & \hol{0\to2}\left(\Gamma\right)\subset\mathcal{R}\\
W_{2} & := & \hol{0\to2}\left(W\right)\subset\mathcal{R}~.
\end{eqnarray*}
The compact set $\adh{W_{2}}$ is a smoothly-embedded real-analytic
disc with boundary $\Gamma_{2}$.
\end{enumerate}
\end{lem}
\begin{figure}[H]
\subfloat[~]{\includegraphics[width=5cm]{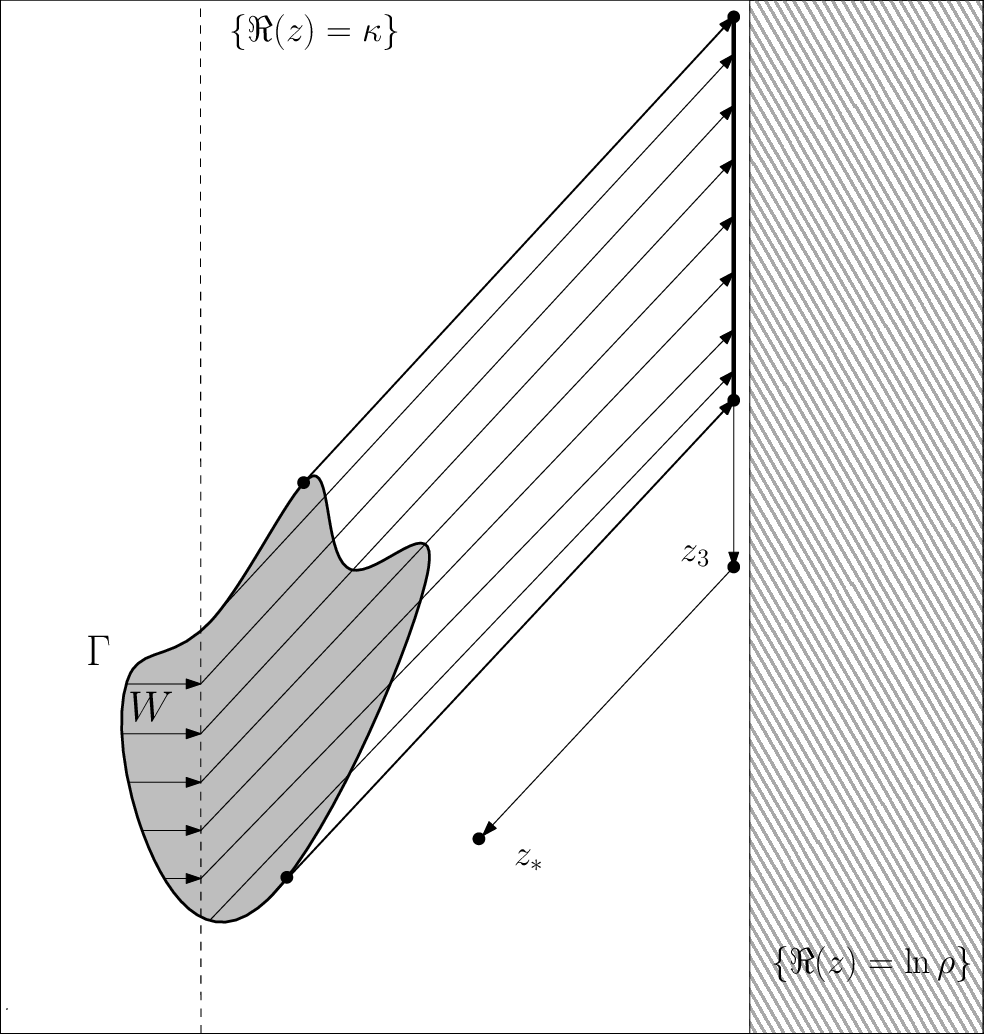}

}\hfill{}\subfloat[~]{\includegraphics[width=7cm]{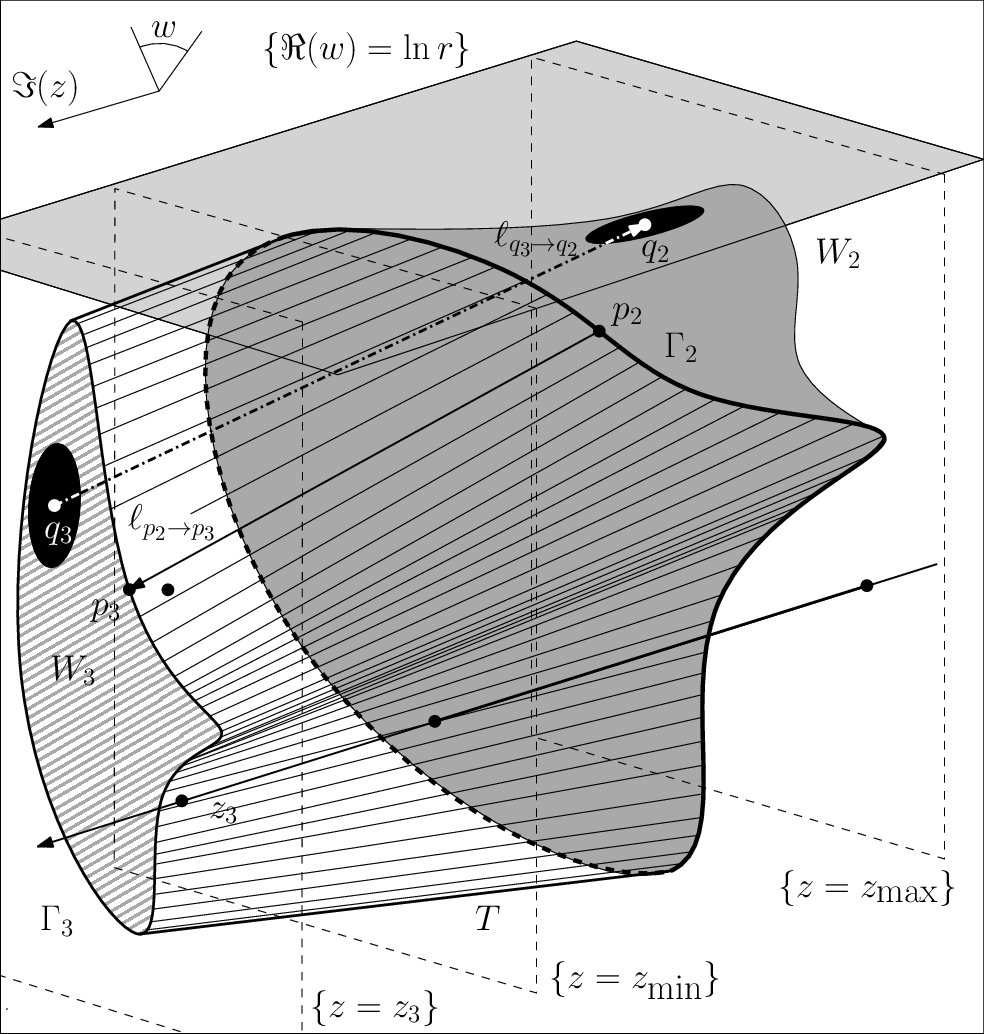}

}

\caption{\label{fig:omega_sconnected}}
\end{figure}

\begin{proof}
Write $\hol{0\to2}\left(z\right)=:\left(\im{\left(z\right)_{2}},w_{2}\left(z\right)\right)$.
\begin{enumerate}
\item The mapping $\hol{0\to2}$ is clearly locally real-analytic and open,
because $w_{2}$ is holomorphic and non-constant. In particular $\partial W_{2}=\Gamma_{2}$.
Show that $\hol{0\to2}$ is injective. If $\adh{W_{2}}$ contains
a double-point $\hol{0\to2}\left(z\right)=\hol{0\to2}\left(z'\right)$
with, say, $\re z\leq\re{z'}$ then $z$ and $z'$ belong to the same
polygonal line $\left[z,\left(z\right)_{1},\left(z\right)_{2}\right]$.
Since the real part of the $w$-component of the lift of the polygonal
line is strictly decreasing we can only have $z=z'$. 
\item The parametric tangent space of $W_{2}$ at $p=\hol{0\to2}\left(z\right)$
is spanned by\linebreak{}
 $\left[\begin{array}{c}
0\\
\frac{\partial w_{2}}{\partial\re z}\left(z\right)
\end{array}\right]$ and $\left[\begin{array}{c}
1\\
\frac{\partial w_{2}}{\partial\im z}\left(z\right)
\end{array}\right]$ as $\frac{\partial\left(z\right)_{2}}{\partial\im z}=\ii$ (we assume
here for the sake of simplicity that $\re z<\kappa$ so that $\frac{\partial\left(z\right)_{2}}{\partial\re z}=0$).
From Cauchy-Riemann formula we know that $\frac{\partial w_{2}}{\partial\re z}\left(z\right)=0$
if, and only if, $w_{2}'\left(z\right)=0$. This outcome is not possible
because $w_{2}$ is locally invertible, hence $\adh{W_{2}}$ is a
smooth real-analytic surface.
\end{enumerate}
\end{proof}
\begin{rem}
This is the only place where we explicitly use the holomorphy of $R$
\emph{via} Cauchy-Riemann formula, although it is not necessary. In
the case where $R$ should only be assumed $C^{1}$ as a real mapping,
the preceding proof can be adapted because $w_{2}$ is a $C^{2}$-diffeomorphism
and therefore $\frac{\partial w_{2}}{\partial\re z}\left(z\right)$
cannot vanish. 
\end{rem}
Although $\tilde{\mathcal{F}}\cap\mathcal{R}$ need  not be transverse
everywhere to $\adh{W_{2}}$, the tangency points are nonetheless
simple as asserted by the following lemma.
\begin{lem}
\label{lem:W2_transverse}For every $p_{2}\in W_{2}$ there exist
a neighborhood $V$ of $p_{2}$ in $\mathcal{R}$ such that any leaf
of $\tilde{\mathcal{F}}\cap V$ intersects $\adh{W_{2}}$ at most
in a single point.\end{lem}
\begin{proof}
Let $p_{2}=\hol{0\to2}\left(z_{0}\right)\in\adh{W_{2}}$ be given
and let $\gamma_{0}$ be the curve linking $\left(z_{0},w_{*}\right)$
to $p_{2}$ along $\tilde{\mathcal{F}}$ above $\left[z_{0},z_{2}\right]$.
We take a finite covering $C=\bigcup_{\iota\leq d}C_{\iota}$ of $\gamma_{0}$
by rectifying holomorphic charts $\left(\psi_{\iota}~:~C_{\iota}\to\ww C^{2}\right)_{\iota\leq d}$
of $\tilde{\fol[~]}$. By relabeling the collection if necessary we
assume that $\left(z_{0},w_{*}\right)\in C_{0}$. We may choose $C_{0}$
small enough so that for any $\left(z,w_{*}\right)\in\left(\adh W\times\left\{ w_{*}\right\} \right)\cap C_{0}$
the tangent curve linking $\left(z,w_{*}\right)$ to $\hol{0\to2}\left(z\right)$
above $\left[z,\left(z\right)_{2}\right]$ is included in $C$. Since
$\tilde{\mathcal{F}}$ is transverse to the fibers of $\left(z,w\right)\mapsto w$
if condition $\left(\tt R\right)$ holds, $\adh W\cap C_{0}$ is transverse
to $\tilde{\fol[~]}\cap C_{0}$: for $C_{0}$ small enough the set
$\adh W\cap C_{0}$ meets any leaf of $\tilde{\fol[~]}\cap C_{0}$
in at most one point. The conclusion follows as $\hol{0\to2}$ is
injective and maps $\adh W$ into $\adh{W_{2}}$. It suffices to take
$V$ such that 
\begin{eqnarray*}
V\cap\adh{\left(W_{2}\right)} & = & \hol{0\to2}\left(\left\{ z~:~\left(z,w_{*}\right)\in C_{0}\right\} \right)~.
\end{eqnarray*}

\end{proof}
We apply Lemma~\ref{lem:surface_W2} to $W:=W_{0}$ (so that $\Gamma=\Gamma_{0}\subset\Omega$)
in order to built $W_{2}\cup\Gamma_{2}$ through the correspondence
map $\hol{0\to2}$ (we may choose $\epsilon$ independent on $z_{0}:=z\in W_{0}\cup\Gamma_{0}$
in Corollary~\ref{cor:carac_definition_Dulac} and use this value
to define $h_{0\to2}$). The key point to complete the proof of Proposition~\ref{prop:th_2}~(1)
is that $\left(z\right)_{3}=z_{3}$ does not actually depend on $z$
once $z_{*}$ is chosen, therefore the process of lifting $\left[\left(z\right)_{2},z_{3}\right]$
in $\tilde{\mathcal{F}}$ takes place solely in $\mathcal{R}$. Because
$\Gamma_{0}\subset\Omega$ the map
\begin{eqnarray*}
\hol{2\to3}~:~\Gamma_{2} & \longrightarrow & \left\{ z=z_{3}\right\} \cap\mathcal{R}\\
p_{2} & \longmapsto & \hol{\left[\Pi\left(p_{2}\right),z_{3}\right]}\left(p_{2}\right)
\end{eqnarray*}
is well-defined. Set
\begin{eqnarray*}
\Gamma_{3} & := & \left\{ \hol{\left[\Pi\left(p\right),z_{3}\right]}\left(p\right)~:~p\in\Gamma_{2}\right\} \subset\left\{ z=z_{3}\right\} \cap\mathcal{R}
\end{eqnarray*}
($\Gamma_{3}$ is a smooth, simple real-analytic loop). We need to
ensure the existence of $W_{3}\subset\left\{ z=z_{3}\right\} \cap\mathcal{R}$
built in the same way from $W_{2}$. In fact we prove the stronger
statement below:
\begin{lem}
\label{lem:surface_W3}The map $\hol{2\to3}$ extends to a bijective
real-analytic map from $\adh{W_{2}}$ onto the compact connected component
$\adh{W_{3}}\subset\left\{ z=z_{3}\right\} \cap\mathcal{R}$ enclosed
by $\Gamma_{3}$. \end{lem}
\begin{proof}
We call $\ell_{p_{2}\to p_{3}}$ the piece of the leaf of $\tilde{\mathcal{F}}\cap\mathcal{R}$
linking $p_{2}\in\Gamma_{2}$ to $p_{3}\in\Gamma_{3}$ above $\left[\Pi\left(p_{2}\right),z_{3}\right]$
(for the sake of clarity the leaves of $\tilde{\mathcal{F}}\cap\mathcal{R}$
are shown as straight lines in Figure~\ref{fig:omega_sconnected}~(B)).
Let $T$ be the smooth cylinder obtained as
\begin{eqnarray*}
T & := & \bigcup_{p_{2}\in\Gamma_{2}}\ell_{p_{2}\to p_{3}}
\end{eqnarray*}
 and $\hat{T}$ be the capped cylinder $\hat{T}:=T\cup\adh{W_{2}}\cup\adh{W_{3}}$
which is an immersed piecewise-real-analytic sphere. The leaf $\ell_{q_{3}}$
of $\tilde{\mathcal{F}}\cap\mathcal{R}$ issued from $q_{3}\in W_{3}$
enters into the space bounded by $\hat{T}$ and, because it is transverse
to the fibers of $\Pi|_{\mathcal{R}}$, must meet the compact $\hat{T}$
at some point. Because $\tilde{\mathcal{F}}\cap\mathcal{R}$ is regular
$\ell_{q_{3}}$ cannot intersect $T$ (invariant by $\tilde{\mathcal{F}}\cap\mathcal{R}$)
and therefore intersects both $W_{2}$ and $W_{3}$. We therefore
obtain a (connected) sub-leaf $\ell_{q_{3}\to q_{2}}\subset\ell_{q_{3}}$
containing both $q_{3}$ and some $q_{2}\in\ell_{q_{3}}\cap\adh{W_{2}}$.
If $q_{3}=p_{3}\in\Gamma_{3}$ we let $\ell_{q_{3}\to q_{2}}:=\ell_{p_{2}\to p_{3}}$
and $q_{2}:=p_{2}$. Define the map
\begin{eqnarray*}
\hol{3\to2}~:~\adh{W_{3}} & \longrightarrow & \adh{W_{2}}\\
q_{3} & \longmapsto & q_{2}
\end{eqnarray*}
whose restriction to $\Gamma_{3}$ is the inverse of $\hol{2\to3}$.
Clearly $\hol{3\to2}$ is injective. We need to show that $\hol{3\to2}$
is continuous, so that it will be an homomorphism onto its image,
which can then only span the whole $\adh{W_{2}}$ for $\hol{3\to2}\left(\Gamma_{3}\right)=\Gamma_{2}$.

\begin{figure}[H]
\subfloat[Accumulation of double intersections (white squares)]{\includegraphics[width=6cm]{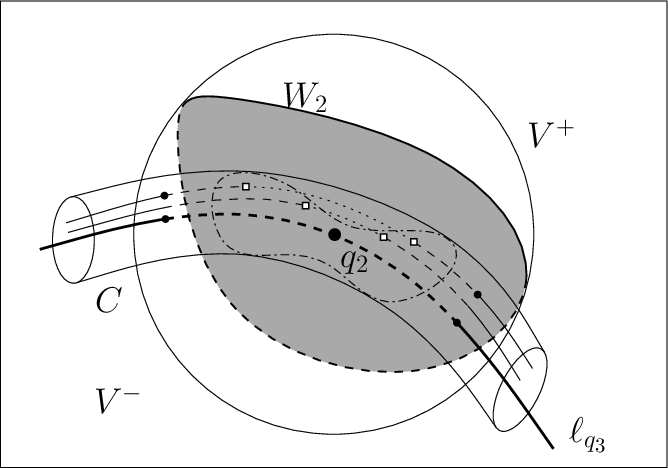}

}\hfill{}\subfloat[Only simple intersections (white discs)]{\includegraphics[width=6cm]{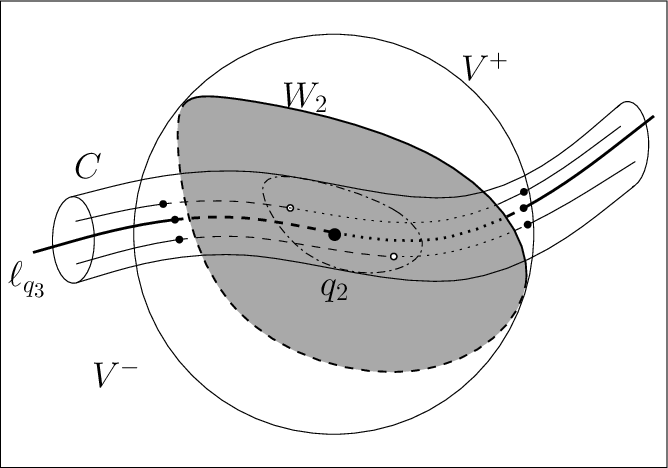}

}

\caption{\label{fig:omega_sconnected_end}}
\end{figure}

First notice that if $q_{2}$ is not a tangency point between $\tilde{\mathcal{F}}\cap\mathcal{R}$
and $\adh{W_{2}}$ then $\hol{3\to2}$ is continuous at $q_{3}$ (proceed
with the argument in a finite covering of $\ell_{q_{3}\to q_{2}}$
by rectifying charts). Now if $\ell_{q_{3}}$ is tangent to $\adh{W_{2}}$
at $q_{2}$ then one among two mutually exclusive situations happens.
In the following $V$ is any sufficiently small neighborhood of $q_{2}$
which is separated by $\adh{W_{2}}$ into two domains $V^{+}$ and
$V^{-}$ with, say, $V^{+}$ outside $\hat{T}$.
\begin{enumerate}
\item The leaf $\ell_{q_{3}}$ does not exit $\hat{T}$ near $q_{2}$, that
is $\ell_{q_{3}}\cap V^{+}=\emptyset$, as in Figure~\ref{fig:omega_sconnected_end}~(A).
\item The leaf pierces $\hat{T}$ at $q_{2}$, \emph{i.e.} $\ell_{q_{3}}\cap V^{\pm}\neq\emptyset$,
as in Figure~\ref{fig:omega_sconnected_end}~(B).
\end{enumerate}
Let $C$ be a small tubular neighborhood of $\ell_{q_{3}}\cap V$
invariant by $\tilde{\mathcal{F}}\cap\mathcal{R}$. Lemma~\ref{lem:W2_transverse}
rules (1) out because in that case there must exist leaves visiting
$V^{+}$ after leaving $V^{-}$ and before entering again in $V^{-}$.
Hence case~(2) is the only possible event and $\hol{3\to2}$ is continuous
as any leaf must exit by $V^{+}$ after arriving through $V^{-}$.\end{proof}
\begin{cor}
For any $z\in W_{0}$ the path $\tilde{\gamma}_{z}$ can be lifted
in $\tilde{\mathcal{F}}$ through $\Pi$ starting from $\left(z,w_{*}\right)$
all the way to $\left(z_{*},\mathcal{D}_{R}\left(z\right)\right)$.
In other words $W_{0}\subset\Omega$ and $\Omega$ is simply connected.\end{cor}
\begin{proof}
Lemma~\ref{lem:surface_W3} asserts that $\tilde{\gamma}_{z}$ can
be lifted at least until $z_{3}$. In the 3-dimensional real slice
$\mathcal{R}':=\left[z_{3},z_{*}\right]\times\ww C$ the saturation
of $\Gamma_{3}$ by the leaves of the regular foliation $\tilde{\mathcal{F}}\cap\mathcal{R}'$
(transverse to the fibers of $\Pi|_{\mathcal{R}'}$) is a smooth cylinder
$T'$ (similar to $T$ in the proof of the previous lemma). Therefore
any leaf of $\tilde{\mathcal{F}}\cap\mathcal{R}'$ starting from $W_{3}$
cannot escape from $T'$ and must reach the transverse line $\left\{ z=z_{3}\right\} $.
\end{proof}

\subsection{\label{sub:boundary}Proof of Proposition~\ref{prop:th_2}~(2)}

A point $z\in\left\{ \re z<\ln\rho\right\} $ belongs to $\Omega$
if, and only if, the lift of the polygonal line $\tilde{\gamma}_{z}$
in $\tilde{\mathcal{F}}$ starting from $\left(z_{0},w_{*}\right)$
does not meet the real $3$-space $\left\{ \re w=\ln r\right\} \subset\ww C^{2}$.
For $z_{0}\in\partial\Omega\backslash\left\{ \re z=\ln\rho\right\} $
we can find a compact neighborhood $W\ni z_{0}$ such that if $r$
were slightly bigger we would have $W\subset\Omega$. In particular
we can assume that the Dulac map is the restriction to $\Omega$ of
an analytic map, written $\mathcal{D}$, on a neighborhood of $W$.
For $z\in W$ we call $\ell_{z}$ the real-analytic curve obtained
as the image of the lift of the path $\tilde{\gamma}_{z}$ in $\tilde{\mathcal{F}}$
starting from $\left(z,w_{*}\right)$ and ending at $\left(z_{*},\mathcal{D}\left(z\right)\right)$.
Then
\begin{eqnarray*}
A_{W} & := & \bigcup_{z\in W}\ell_{z}
\end{eqnarray*}
 is a compact real-analytic $3$-manifold which intersects $\left\{ \re w=\ln r\right\} $
along a compact real surface $\mathcal{S}$ (with boundary). If $z\in W\cap\partial\Omega$
the curve $\ell_{z}$ cannot meet $\left\{ \re w>\ln r\right\} $.
Therefore $\ell_{z}$ intersects $\mathcal{S}$ in a finite numbers
of points $\left(p_{n}^{z}\right)_{1\leq n\leq d}$ with $d=d\left(z\right)\geq1$,
all of them tangency points. According to~(\ref{eq:variation_w}),
assuming $\Pi\left(p_{n}^{z}\right)$ lies on the line segment $\left[\left(z\right)_{j},\left(z\right)_{j+1}\right]$
of direction $\theta_{j}\in\ww S^{1}$, the tangency point $p_{n}^{z}$
lies in 
\begin{eqnarray*}
\mathcal{T}_{j} & := & \left\{ p\in\ww C^{2}~:~\re{\frac{\theta_{j}}{\lambda}\left(1+R\circ\mathcal{E}\left(p\right)\right)}=0\right\} \cap\mathcal{S}~.
\end{eqnarray*}
If $\mathcal{T}_{j}=\mathcal{S}$ then $R=0$ and the result is clear.
Otherwise $\mathcal{T}:=\bigcup_{j}\mathcal{T}_{j}$ is a finite union
of irreducible real-analytic curves, as is $\left\{ z\in W~:~\ell_{z}\cap\mathcal{S}\subset\mathcal{T}\right\} $.
The latter contains $\partial\Omega\cap W$. 
\begin{rem}
In case $R$ is merely $C^{1}$ we cannot guarantee that $\ell_{z}\cap\mathcal{S}$
is finite.
\end{rem}

\subsection{Proof of Proposition~\ref{prop:th_2}~(3)}

From Lemma~\ref{lem:dual_searchlight} follows the fact that $\adh{\Omega}\cap\left\{ \re z=\ln\rho\right\} $
is a nonempty line segment, as can be seen by adapting in a straightforward
way the proof of Lemma~\ref{lem:boundary_z_FLF}. In particular $\Omega$
is connected. To prove that $\adh{\Omega}\cap\left\{ \re z=\ln\rho\right\} $
can be arbitrarily wide provided that $\re{w_{*}}$ be sufficiently
small it is sufficient to invoke the fact that $\left\{ y=0\right\} $
is the adherence of a separatrix of $\fol[R]$, so that $\grp$ contains
elements winding more and more around $\left\{ x=0\right\} $.

\subsection{Proof of Proposition~\ref{prop:th_2}~(4)}

Because $S\left(z_{*},-\vartheta,\delta\right)\subset\Omega_{*}$
we only need to ensure that $\im{\vartheta_{+}}$ and $\im{\vartheta_{-}}$
have opposite signs, where $\vartheta_{\pm}$ is defined by~(\ref{eq:theta_mp}).
This can be enforced by taking $\re{z_{*}}$ and $\re{w_{*}}$ negative
enough, \emph{i.e.} by taking $\delta$ as close to $\frac{\pi}{2}$
as needed.

\section{\label{sec:asymptotics}Asymptotics of the Dulac map}
\begin{defn}
A \textbf{time-form} of a vector field $X$ is a meromorphic $1$-form
$\tau$ such that $\tau\left(X\right)=1$.
\end{defn}
Because of the specific form of $X_{R}$ one can always choose a time-form
as 
\begin{eqnarray*}
\tau & := & \frac{\dd x}{\lambda x}\,.
\end{eqnarray*}
We prove first in Section~\ref{sub:Integral-representation} that
to obtain the image of $x$ by the Dulac map $\mathcal{D}_{R}$ we
need to compute the integral 
\begin{eqnarray*}
\int_{\gamma_{R}\left(x\right)}R\tau &  & \,,
\end{eqnarray*}
where $\gamma_{R}$ is a path tangent to $\fol$ linking $\left(x,y_{*}\right)$
to some point of $\Pi^{-1}\left(x_{*}\right)$. This is~(1) of the
Proximity Theorem. We intend in a second step (Section~\ref{sub:Approximation})
to compare this value with that of
\begin{eqnarray*}
\int_{\gamma_{0}\left(x\right)}R\tau
\end{eqnarray*}
which can be explicitly computed (Section~\ref{sub:Computations}).
We prove more generally the quantitative result:
\begin{thm}
\label{thm:asymptotics}Let $N\in\ww N_{>0}$ be given. There exists
a constant $M>0$ depending only on $N,\,\lambda,$ $a$, $\rho$,
$\norm R$ and $\delta$ such that for any $G\in\mathcal{O}\left(\mathcal{U}\right)\cap x^{a}\germ{x,y}$
with $\frac{\partial G}{\partial y}$ bounded and all $x=\exp z$
with $\left|\im{z-z_{*}}\right|\leq\pi N$ one has 
\begin{eqnarray*}
\left|\int_{\gamma_{R}\left(x\right)}G\tau-\int_{\gamma_{0}\left(x\right)}G\tau\right| & \leq & M\norm{\frac{\partial G}{\partial y}}\left|y_{*}\right|\left|x\right|^{a}\,.
\end{eqnarray*}
\end{thm}
\begin{rem}
Under the assumption $\re{a+\frac{1}{\lambda}}>0$ we have $\left|x\right|^{a}=o\left(\left|x^{-\nf 1{\lambda}}\log x\right|\right)$,
proving~(2) of the Proximity Theorem when $G:=R$.
\end{rem}

\subsection{\label{sub:Integral-representation}Integral expression of the Dulac
map: proof of Proximity Theorem~(1)}

\begin{lem}
\label{lem:X_primitive} Let $\Sigma$ be a transverse to $\fol[R]$;~we
refer to Definition~\ref{def_holonomy} for the construction of the
groupoid $\grp$. For given $G\in\mathcal{O}\left(\mathcal{U}\right)$
the integration process 
\begin{eqnarray*}
F\,:\,\gamma\in\grp & \longmapsto & \int_{\gamma}G\tau
\end{eqnarray*}
gives rise to a holomorphic function whose Lie derivative $X_{R}\cdot F$
along $X_{R}$ can be computed by considering $F$ as a local analytic
function of the end-point $\ept$. Then 
\begin{eqnarray*}
X_{R}\cdot F & = & G\,.
\end{eqnarray*}
\end{lem}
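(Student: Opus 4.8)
The statement packages together three claims about the integration functional $F(\gamma)=\int_\gamma G\tau$ on the tangential groupoid $\grp$: that it is well-defined, that it is holomorphic, and that — when read as a germ of a function of the endpoint $\ept$ — its Lie derivative along $X_R$ equals $G$. I would treat these in that order. First, well-definedness: a class $\gamma\in\grp$ is an equivalence class under tangential homotopy with fixed endpoints, so I must check that $\int_\gamma G\tau$ depends only on the class. Since $\tau$ is meromorphic with polar locus contained in $\{x=0\}$, and the leaves of $\fol[R]$ lie in $\hat{\mathcal U}=\mathcal U\setminus\{xy=0\}$ (the separatrices being the only leaves meeting $\{xy=0\}$, and those are excluded because the relevant transverses sit in $\{x\neq0\}$), the restriction $G\tau|_L$ to any leaf $L$ is a genuine holomorphic $1$-form on $L$. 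A tangential homotopy with fixed endpoints is a homotopy inside a single leaf $L$, so by the homotopy invariance of integrals of closed (in particular holomorphic) $1$-forms on a surface, $\int_\gamma G\tau$ is unchanged. Hence $F$ is a well-defined function on $\grp$.

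Next, holomorphy. Here I would invoke the foliated-complex-surface structure of $\grp$ from Definition~\ref{def_tang_groupoid}(2): the endpoint map $\sigma:\gamma\mapsto\ept$ is a local biholomorphism onto $\sat{\Sigma}\subset\hat{\mathcal U}$. So locally near a class $\gamma_0$ we may parametrise $\grp$ by $(\ept)$, or equivalently by a leaf-coordinate together with the transverse coordinate on $\Sigma$, and in these coordinates $F$ is obtained by adding to the (constant) value along $\gamma_0$ the path-integral of the holomorphic $1$-form $G\tau$ along a short path from the old endpoint to the nearby new one, varying holomorphically. Concretely, fixing a holomorphic flow-box for $X_R$ around $\ept[\gamma_0]$ in which $X_R=\lambda x\,\pp x$ up to the obvious rescaling — or more simply just using that $\tau(X_R)=1$ so that $G\tau$ integrated along a flow-arc of $X_R$ of complex time $t$ equals $\int_0^t G(\phi_s(\cdot))\,ds$ — one sees $F$ is, locally, a primitive that depends holomorphically on the endpoint and on the transverse parameter. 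Patching these local pieces along the (arcwise-connected, locally simply-connected) groupoid gives a global holomorphic function, using the well-definedness already established to guarantee the pieces agree on overlaps.

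Finally, the Lie-derivative identity. Once $F$ is known to be, locally near any $\gamma_0$, a holomorphic function of $\ept$, I compute $X_R\cdot F$ by differentiating along the flow of $X_R$: moving the endpoint by flowing for complex time $t$ appends to $\gamma_0$ a flow-arc, and
\begin{eqnarray*}
F(\gamma_0 \cdot \phi_{[0,t]}) - F(\gamma_0) & = & \int_0^t (G\tau)(X_R)\big|_{\phi_s(\ept[\gamma_0])}\,\dd s \;=\; \int_0^t G\big(\phi_s(\ept[\gamma_0])\big)\,\dd s\,,
\end{eqnarray*}
using $\tau(X_R)=1$. Differentiating at $t=0$ yields $X_R\cdot F=G$ at the point $\ept[\gamma_0]$, which is an arbitrary point of $\sat{\Sigma}$.

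**Main obstacle.** The genuinely delicate point is not any single computation but making precise the sentence "by considering $F$ as a local analytic function of the end-point": one must be sure that the local primitive structure is consistent across the groupoid, i.e. that the monodromy of $F$ around nontrivial loops of leaves does not spoil holomorphy or the differential identity. This is exactly where one leans on the groupoid being the foliated \emph{universal} covering of $\sat{\Sigma}$ (so loops in leaves have been unwound) together with the homotopy-invariance from the first step; modulo that structural input — available from Definition~\ref{def_tang_groupoid} and, for the ambient incompressibility, from Theorem~\ref{thm:incompress} — the rest is the routine flow-box bookkeeping sketched above. A secondary minor care is to confirm $\tau=\dd x/(\lambda x)$ is indeed holomorphic (not merely meromorphic) along every leaf in play, which holds because those leaves avoid $\{x=0\}$.
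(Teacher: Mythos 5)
Your argument is correct and is essentially the paper's own proof: the paper simply rectifies $X_R$ in a local flow box ($\psi^*X_R=\partial/\partial t$, so $\psi^*(G\tau)=G\circ\psi\,\dd t$) and applies the fundamental theorem of calculus, which is exactly your flow-arc computation using $\tau(X_R)=1$. The extra care you devote to well-definedness under tangential homotopy and to holomorphy via the local biholomorphism $\sigma$ is sound detail that the paper leaves implicit, not a different route.
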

\begin{proof}
Outside the singular locus of $X_{R}$ there exists a local rectifying
system of coordinates: a one-to-one map $\psi$ such that $\psi^{*}X_{R}=\pp t$.
In these coordinates we have $\psi^{*}\left(G\tau\right)=G\circ\psi\dd t$.
The fundamental theorem of integral calculus yields the result.
\end{proof}
Notice that $X_{0}$ admits a (multivalued) first-integral with connected
fibers
\begin{eqnarray*}
H_{0}\left(x,y\right) & := & x^{-\nf 1{\lambda}}y\,,
\end{eqnarray*}
which means that it lifts through $\sigma$ to a holomorphic map,
still written $H_{0}$, constant along the leaves of $\sigma^{*}\fol[0]$
and whose range is in one-to-one correspondence with the space of
leaves of $\sigma^{*}\fol[0]$.
\begin{lem}
\label{lem:first-int}Let $\Sigma$ be a transverse to $\fol$. The
function
\begin{eqnarray*}
H_{R}\,:\,\grp & \longrightarrow & \ww C\\
\gamma & \longmapsto & H_{0}\exp\int_{\gamma}-R\tau
\end{eqnarray*}
 is a holomorphic first-integral of $\sigma^{*}\fol$ with connected
fibers.\end{lem}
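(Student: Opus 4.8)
The plan is to show that $H_R$ is constant along leaves of $\sigma^*\fol$ and that it separates them (connected fibers), by combining the first-integral property of $H_0$ for the model with the defining property of the integral $\int_\gamma R\tau$ established in Lemma~\ref{lem:X_primitive}. The key computational input is that the Lie derivative of $H_R$ along $X_R$ vanishes. Writing $H_R = H_0\,\exp F$ with $F(\gamma):=\int_\gamma -R\tau$, and viewing both $H_0$ and $F$ as local analytic functions of the endpoint $\ept$, Leibniz's rule gives
\begin{eqnarray*}
X_R\cdot H_R & = & \left(X_R\cdot H_0\right)\exp F + H_0\left(X_R\cdot F\right)\exp F\,.
\end{eqnarray*}
By Lemma~\ref{lem:X_primitive} applied with $G=-R$ we have $X_R\cdot F = -R$. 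For the first term I would compute $X_R\cdot H_0$ directly: since $X_0\cdot H_0 = 0$ (as $H_0=x^{-1/\lambda}y$ is a first integral of $X_0=\lambda x\partial_x+y\partial_y$) and $X_R = X_0 + R y\partial_y$, one gets $X_R\cdot H_0 = R\,y\,\partial_y H_0 = R\, y\, x^{-1/\lambda} = R\,H_0$. Substituting, $X_R\cdot H_R = (R H_0 - H_0 R)\exp F = 0$.

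Next I would upgrade "$X_R\cdot H_R = 0$" to "$H_R$ is constant on each leaf of $\sigma^*\fol$''. This is where a little care is needed: $\sigma^*\fol$ is the pulled-back foliation on the tangential groupoid $\grp$, whose leaves are the preimages of leaves of $\fol[R]|_{\sat{\Sigma}}$ under the locally biholomorphic map $\sigma$. Locally $H_R$ is a holomorphic function of $\ept=\sigma(\gamma)$, its Lie derivative along the (lifted) vector field $X_R$ vanishes, hence $H_R$ is locally constant along the integral curves of $X_R$; since each leaf of $\sigma^*\fol$ is connected (it is the universal cover of a leaf of $\fol[R]|_{\sat{\Sigma}}$, so in particular connected) and leaves are exactly the orbits of the rectifying flow of $X_R$, $H_R$ is constant on each leaf. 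I should also note $H_R$ is holomorphic on all of $\grp$: $\sigma$ is locally biholomorphic so $H_0\circ\sigma$ is holomorphic, the path integral $\gamma\mapsto\int_\gamma -R\tau$ is holomorphic by Lemma~\ref{lem:X_primitive} (it is well-defined on $\grp$ precisely because we quotient by tangential homotopy), and $\exp$ is entire.

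Finally, connected fibers. The fibers of $H_0$ on $\grp[][\Sigma]$ are the leaves of $\sigma^*\fol[0]^\Sigma$, which are connected; I want the analogue for $H_R$. The point is that $H_R$ and $H_0$ differ by the nowhere-vanishing factor $\exp\int_\gamma -R\tau$, which restricted to a single leaf $L$ of $\sigma^*\fol$ is a holomorphic function. On $L$, $H_0$ is not constant in general, but I can argue fiberwise: fix a value $c$; a point $\gamma$ lies in $H_R^{-1}(c)$ iff $H_0(\gamma) = c\exp\int_\gamma R\tau$. Since $H_R$ is a leafwise-constant holomorphic submersion onto its image (its differential is nonzero wherever $dH_0\neq0$, and the two vector fields $X_R$, $\partial_w$-type transverse direction span the tangent space), its fibers are unions of leaves; the cleanest route is to observe that $\grp$ fibers over the leaf space by $\gamma\mapsto L_\gamma$, that $H_R$ factors through this fibration, and that on the leaf space $H_R$ is injective — this follows because two leaves of $\fol[R]|_{\sat{\Sigma}}$ with the same $H_R$-value correspond, under the holonomy-transport identification of nearby leaves, to the same leaf, exactly as for $H_0$ in the model, the perturbation by $\exp\int R\tau$ being a leafwise unit and hence not merging distinct leaves. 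I expect this last point — cleanly identifying fibers of $H_R$ with single leaves rather than a priori unions of leaves — to be the main obstacle, and I would handle it by transporting the known statement for $H_0$ via the foliated homeomorphism between $\sat{\Sigma}$ for $\fol[R]$ and for $\fol[0]$ furnished by condition~$(\condR)$ (the "controlled topological type'' alluded to before Definition~\ref{def_tang_groupoid}), or, more self-containedly, by checking that along any path transverse to the foliation the map $c\mapsto$ (the $H_R$-value of the leaf through the moving point) is a local biholomorphism, so level sets cannot jump between leaves.
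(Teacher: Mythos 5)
Your Lie-derivative computation is correct and is, up to the order in which Leibniz's rule is applied, identical in substance to the paper's: both reduce to the three facts $X_{0}\cdot H_{0}=0$, $\left(y\pp y\right)\cdot H_{0}=H_{0}$ and $X_{R}\cdot\int_{\gamma}-R\tau=-R$ (Lemma~\ref{lem:X_primitive} with $G=-R$). The holomorphy of $H_{R}$ on $\grp$ and the passage from vanishing Lie derivative to leafwise constancy are also handled as in the paper.

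The connected-fibers step, which you yourself flag as the main obstacle, is however left with a genuine gap: neither of the two routes you sketch closes it. The ``local biholomorphism along a transversal'' argument only shows that $H_{R}$ separates \emph{nearby} leaves, i.e.\ that each fiber is locally a single leaf; it does not exclude a fiber being a disjoint union of leaves that are far apart in the leaf space, which is precisely what connectedness of fibers must rule out. The alternative route, a foliated homeomorphism between the saturations of $\Sigma$ for $\fol$ and for $\fol[0]$, is not available at this point of the paper (condition~$\left(\condR\right)$ does not furnish it off the shelf) and is far heavier than needed. The observation that makes the step a one-liner --- and is the one the paper uses --- is that $\int_{\gamma}-R\tau=0$ when $\gamma$ is a constant path, so that $H_{R}$ restricted to $\Sigma\subset\grp$ coincides with $H_{0}$. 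Every leaf of $\sigma^{*}\fol$ retracts within itself onto the constant path at its base point $\spt\in\Sigma$; since $H_{0}$ has connected fibers (its range being in one-to-one correspondence with the leaf space), two leaves lying in the same fiber of $H_{R}$ have base points with equal $H_{0}$-values and therefore coincide, so each fiber of $H_{R}$ is a single connected leaf. You should replace your two tentative routes by this remark.
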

\begin{proof}
The fact that $H_{R}$ is holomorphic on $\grp$ is clear enough from
Lemma~\ref{lem:X_primitive}. It is a first integral of $\sigma^{*}\fol$
if, and only if, the Lie derivative $X_{R}\cdot H_{R}$ vanishes.
This quantity is computed as follows:
\begin{eqnarray*}
X_{R}\cdot H_{R} & = & X_{0}\cdot H_{R}+R\left(y\pp y\right)\cdot H_{R}\\
 & = & H_{R}\times\left(X_{R}\cdot\int_{\gamma}-R\tau+R\frac{\left(y\pp y\right)\cdot H_{0}}{H_{0}}\right)\,.
\end{eqnarray*}
Since $\left(y\pp y\right)\cdot H_{0}=H_{0}$ our claim holds. The
fact that $H_{R}$ has connected fibers is a direct consequence of
both facts that $H_{0}$ also has and $H_{R}|_{\Sigma}=H_{0}$.\end{proof}
\begin{cor}
\label{cor:th_1}We have
\begin{eqnarray*}
\mathcal{D}_{R} & = & \mathcal{D}_{0}\times\exp\int_{\bullet}R\tau\,.
\end{eqnarray*}
\end{cor}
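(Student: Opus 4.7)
The plan is to exploit the first-integral $H_{R}$ built in Lemma~\ref{lem:first-int}: since $H_{R}$ is constant on the leaves of $\sigma^{*}\fol[R]$, evaluating it at the two endpoints of a lifted path yields the sought multiplicative relation between $\mathcal{D}_{R}$ and $\mathcal{D}_{0}$.

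More precisely, fix $\gamma\in\Gamma_{*}$, so that $\spt=(x,y_{*})$ (with a prescribed determination coming from the choice of the transverse $\Sigma=\mathcal{U}\cap\{y=y_{*},\,x\neq 0\}$) and $\ept=(x_{*},\mathcal{D}_{R}(\gamma))$. Because $\spt\in\Sigma$, the tangential integral $\int_{\spt}-R\tau$ is zero, so
\begin{eqnarray*}
H_{R}(\spt) & = & H_{0}(\spt) \;=\; x^{-\nf{1}{\lambda}}y_{*}\,.
\end{eqnarray*}
On the other hand, by Lemma~\ref{lem:first-int},
\begin{eqnarray*}
H_{R}(\ept) & = & H_{0}(\ept)\exp\int_{\gamma}-R\tau \;=\; x_{*}^{-\nf{1}{\lambda}}\mathcal{D}_{R}(\gamma)\exp\int_{\gamma}-R\tau\,.
\end{eqnarray*}
Since $H_{R}$ is a first-integral of $\sigma^{*}\fol[R]$, both quantities coincide, yielding
\begin{eqnarray*}
\mathcal{D}_{R}(\gamma) & = & \left(\frac{x_{*}}{x}\right)^{\!\nf{1}{\lambda}}\!y_{*}\cdot\exp\int_{\gamma}R\tau\,.
\end{eqnarray*}
Specializing this identity to the model $R=0$ (noting that $\Gamma_{*}$ only depends on the underlying topology, hence may be used for $\fol[0]$ as well once we fix the same transverse $\Sigma$ and the same starting determination) gives $\mathcal{D}_{0}(\gamma)=(x_{*}/x)^{\nf{1}{\lambda}}y_{*}$. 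Dividing one relation by the other yields the announced formula.

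The only delicate point is to make sure that the determination of $H_{0}=x^{-\nf{1}{\lambda}}y$ chosen to compute $\mathcal{D}_{R}(\gamma)$ at $\ept$ is consistent with the one used for $\mathcal{D}_{0}(\gamma)$, but this is automatic in the groupoid framework: both first-integrals $H_{R}$ and $H_{0}$ are built on $\grp$ (equivalently on the universal cover) from the same base determination on $\Sigma$, and their difference along $\gamma$ is captured entirely by the factor $\exp\int_{\gamma}-R\tau$. No further analytic input is required.
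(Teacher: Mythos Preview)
Your proof is correct and follows essentially the same route as the paper: you use the first-integral $H_{R}$ of Lemma~\ref{lem:first-int}, equate its values at the two endpoints of $\gamma$, and then identify the factor $(x_{*}/x)^{1/\lambda}y_{*}$ with $\mathcal{D}_{0}$. The paper's version is just terser, invoking the linearity of $H_{0}$ in $y$ at fixed $x_{*}$ rather than writing the formula for $\mathcal{D}_{0}$ explicitly. One minor wording point: $\Gamma_{*}$ itself does depend on $R$ (its elements are paths tangent to $\fol[R]$), so it is not literally ``the same'' groupoid for $\fol[0]$; what you really use, and what your last paragraph correctly expresses, is that both $\mathcal{D}_{R}$ and $\mathcal{D}_{0}$ are parametrized by the same data (the starting point and the determination, i.e.\ a point of the universal cover via $\Pi$), which is all that is needed.
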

\begin{proof}
For any path $\gamma\in\Gamma^{*}$ we have the relation $H_{R}\left(\spt\right)=H_{R}\left(\gamma\right)$,
that is
\begin{eqnarray*}
H_{0}\left(\gamma\right)\exp\int_{\gamma}-R\tau & = & H_{0}\left(\spt[\gamma]\right)\,.
\end{eqnarray*}
The conclusion follows since $\gamma\mapsto H_{0}\left(\gamma\right)$
is linear with respect to the $y$-coordinate of $\gamma$ when $x_{*}$
is fixed.
\end{proof}

\subsection{\label{sub:Approximation}Approximation to the formal model}

We fix once and for all a preimage $\left(z_{*},w_{*}\right)\in\mathcal{E}^{-1}\left(x_{*},y_{*}\right)$.
Since $\mathcal{D}_{R}$ is naturally defined on the universal covering
$\tilde{\mathcal{U}}$ of $\hat{\mathcal{U}}:=\mathcal{U}\backslash\left\{ xy=0\right\} $
we keep on working in logarithmic coordinates
\begin{eqnarray*}
\left(x,y\right) & = & \mathcal{E}\left(z,w\right)=\left(\exp z,\exp w\right)\,.
\end{eqnarray*}
 Notice that the time form $\tau$ is transformed into 
\begin{eqnarray*}
\tilde{\tau} & :=\mathcal{E}^{*}\tau= & \frac{1}{\lambda}\dd z\,.
\end{eqnarray*}

We make here the hypothesis that $\re{\lambda}<0$. We need to compare
this integral and the one obtained for the model, \emph{i.e.} bound
\begin{eqnarray*}
\Delta\left(z_{0}\right) & := & \int_{\tilde{\gamma}\left(z_{0}\right)}\tilde{G}\left(z,w_{R}\left(z,z_{0}\right)\right)-\tilde{G}\left(z,w_{0}\left(z,z_{0}\right)\right)\dd z
\end{eqnarray*}
where $\tilde{\gamma}\left(z_{0}\right)$ is a path linking $z_{0}$
to $z_{*}$ within $\Omega_{\left(z_{0},w_{*}\right)}$ and $z\mapsto w_{R}\left(z,z_{0}\right)$
is its lift in $\tilde{\fol[R]}$ starting from $\left(z_{0},w_{*}\right)$.
We mention that
\begin{eqnarray*}
w_{0}\left(z,z_{0}\right) & = & w_{*}+\frac{z-z_{0}}{\lambda}\,.
\end{eqnarray*}
For any $\left(z,w_{j}\right)\in\tilde{\mathcal{U}}$ we have the
estimate 
\begin{eqnarray*}
\left|\tilde{G}\left(z,w_{2}\right)-\tilde{G}\left(z,w_{1}\right)\right| & \leq & \left|\exp\left(az\right)\right|\norm{\frac{\partial G}{\partial y}}\left|\exp w_{2}-\exp w_{1}\right|
\end{eqnarray*}
 so that 
\begin{eqnarray*}
\left|\Delta\left(z_{0}\right)\right| & \leq & \norm{\frac{\partial G}{\partial y}}\int_{\tilde{\gamma}}\left|\exp\left(az+w_{0}\left(z,z_{0}\right)\right)\left(\exp\left(w_{R}\left(z,z_{0}\right)-w_{0}\left(z,z_{0}\right)\right)-1\right)\dd z\right|\,.
\end{eqnarray*}
Setting
\begin{eqnarray*}
D_{R}\left(z,z_{0}\right) & := & \left|w_{R}\left(z,z_{0}\right)-w_{0}\left(z,z_{0}\right)\right|
\end{eqnarray*}
and taking $\left|\exp z-1\right|\leq\left|z\right|\exp\left|z\right|$
into account we derive 
\begin{eqnarray*}
\left|\Delta\left(z_{0}\right)\right| & \leq & \norm{\frac{\partial G}{\partial y}}\int_{\tilde{\gamma}}\exp\re{az+w_{0}\left(z,z_{0}\right)}D_{R}\left(z,z_{0}\right)\exp D_{R}\left(z,z_{0}\right)\left|\dd z\right|\,.
\end{eqnarray*}
The proof is done when the next lemma is established:
\begin{lem}
There exists a constant $K>0$, depending only on $N,\,\lambda,$
$a$, $\rho$, $\norm R$, $z_{*}$ and $\delta$, such that 
\begin{eqnarray*}
\sup_{t}D_{R}\left(\tilde{\gamma}\left(t\right),z_{0}\right) & \leq & K
\end{eqnarray*}
 where $\tilde{\gamma}$ is the integration path built in Proposition~\ref{prop:integ_path}.
The values of $K$ is explicitly, if crudely, determined in the proof
to come.\end{lem}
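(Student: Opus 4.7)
The plan is to decompose the polygonal path $\tilde\gamma$ into its four line segments $[z_{0},z_{1}]$, $[z_{1},z_{2}]$, $[z_{2},z_{3}]$, $[z_{3},z_{*}]$ and to control the deviation $D_{R}$ on each separately by applying Lemma~\ref{lem:_growth_estim_FLF} with base point the starting vertex of the segment. Parametrize the $j$-th segment as $z(t) = z_{j} + t\theta_{j}$ with $t\in[0,\ell_{j}]$, and write $w_{R}^{(j)}(t)$ for the $\tilde{\fol}$-lift of this ray starting from $(z_{j}, w_{R}(z_{j}, z_{0}))$. Since $w_{0}(z, z_{0}) = w_{*} + (z - z_{0})/\lambda$ is affine in $z$, the elementary identity
\[
w_{R}^{(j)}(t) - w_{0}(z(t), z_{0}) = \bigl(w_{R}^{(j)}(t) - w_{R}(z_{j}, z_{0}) - t\theta_{j}/\lambda\bigr) + \bigl(w_{R}(z_{j}, z_{0}) - w_{0}(z_{j}, z_{0})\bigr)
\]
holds. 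Passing to moduli and iterating telescopically in $j$, the supremum of $D_{R}$ along $\tilde\gamma$ is dominated by the sum, over the four segments, of the quantities $|w_{R}^{(j)}(t) - w_{R}(z_{j}, z_{0}) - t\theta_{j}/\lambda|$, each of which falls directly within the scope of Lemma~\ref{lem:_growth_estim_FLF}.

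I would then examine the four segmental bounds in turn, systematically exploiting the rewriting $\exp(a\re{z_{j}})\,|1 - \exp(a\ell_{j}\re{\theta_{j}})| = |\exp(a\re{z_{j}}) - \exp(a\re{z_{j+1}})|$. On $[z_{0}, z_{1}]$ the direction is horizontal, $\theta_{0} = 1$, and~(\ref{eq:growth_estim_FLF}) yields the bound $\tfrac{\norm R}{|\lambda|a}\,|\exp(a\kappa) - \exp(a\re{z_{0}})| \leq \tfrac{\norm R\,\exp(a\kappa)}{|\lambda|a}$; the case $\re{z_{0}} \geq \kappa$ is trivial. On $[z_{1}, z_{2}]$ and on $[z_{3}, z_{*}]$ the direction is $\pm\vartheta_{\pm}$, whose real part has modulus at least $|\re{\vartheta_{\pm}}|>0$ by the choice made in Proposition~\ref{prop:integ_path}; the endpoints lie in $\{\re z \leq \ln\rho\}$, so the rewritten factor is at most $2\rho^{a}$ and each contribution is bounded by $\tfrac{2\rho^{a}\,\norm R}{|\lambda|\,|\re{\vartheta_{\pm}}|\,a}$. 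On $[z_{2}, z_{3}]$ the direction is purely imaginary $\theta_{2} = \pm\ii$, so I use the limiting form~(\ref{eq:growth_estim_FLF-1}), which produces a bound $\tfrac{\rho^{a}\,\norm R\,\ell_{2}}{|\lambda|}$ with $\ell_{2} \leq 2\pi N + \tan(|\arg\vartheta| + \delta)(\ln\rho - \kappa)$ taken directly from Proposition~\ref{prop:integ_path}.

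Summing the four pieces produces a constant $K$ that depends only on $N$, $\lambda$, $a$, $\rho$, $\norm R$, $z_{*}$ (through $\kappa$) and $\delta$ (through $|\re{\vartheta_{\pm}}|$ and the $\tan$-factor), and the same $K$ dominates $D_{R}(\tilde\gamma(t), z_{0})$ at every intermediate $t$ since only a partial sum of the segmental bounds enters at such a point. The only subtlety I anticipate is verifying that no bound retains an uncontrolled dependency on $z_{0}$ as $\re{z_{0}} \to -\infty$. The only segment where $\re{z_{0}}$ could slip in is $[z_{0}, z_{1}]$, but there the suspect factor $\exp(a\re{z_{0}})\,|1 - \exp(a(\kappa - \re{z_{0}}))|$ collapses precisely to $|\exp(a\kappa) - \exp(a\re{z_{0}})| \leq \exp(a\kappa)$; all other segments involve only $\re{z_{j}}$ with $j \geq 1$, which are confined to $[\kappa, \ln\rho]$. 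Apart from this verification the argument reduces to bookkeeping the explicit form of the summed constants.
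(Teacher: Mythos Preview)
Your argument is correct and follows essentially the same route as the paper: decompose $\tilde\gamma$ into its four line segments, apply the growth estimates~(\ref{eq:growth_estim_FLF}) and~(\ref{eq:growth_estim_FLF-1}) of Lemma~\ref{lem:_growth_estim_FLF} on each segment with base point its starting vertex, and accumulate the bounds telescopically. Your presentation is slightly more explicit (you spell out the telescoping identity and the rewriting of the exponential factor) and your constants are marginally sharper, but the structure and the ingredients are identical to the paper's proof.
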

\begin{proof}
Invoking the estimate~(\ref{eq:growth_estim_FLF}) from Lemma~\ref{lem:_growth_estim_FLF}
and setting 
\begin{eqnarray*}
C_{1} & := & \frac{\norm R}{a\left|\lambda\right|}\\
C_{2} & := & \frac{C_{1}}{\re{\vartheta_{+}}}\\
C_{3} & := & a\rho^{a}C_{1}
\end{eqnarray*}
we know that, using the number $\kappa$ obtained in Proposition~\ref{prop:integ_path},
\begin{eqnarray*}
\sup_{z\in\left[z_{0},z_{1}\right]}D_{R}\left(z,z_{0}\right) & \leq & K_{1}:=C_{1}\left(\rho^{a}+\exp\left(a\kappa\right)\right)\\
\sup_{z\in\left[z_{1},z_{2}\right]}D_{R}\left(z,z_{0}\right) & \leq & K_{2}:=K_{1}+C_{2}\left(\rho^{a}+\exp\left(a\kappa\right)\right)\\
\sup_{z\in\left[z_{2},z_{3}\right]}D_{R}\left(z,z_{0}\right) & \leq & K_{3}:=K_{2}+C_{3}\left(2\pi N+\tan\left(\left|\arg\vartheta_{+}\right|\right)\left(\ln\rho-\kappa\right)\right)\\
\sup_{z\in\left[z_{3},z_{*}\right]}D_{R}\left(z,z_{0}\right) & \leq & K:=K_{3}+C_{2}\left(\rho^{a}+\exp\re{az_{*}}\right)\,.
\end{eqnarray*}

\end{proof}
We conclude now the proof starting from 
\begin{eqnarray*}
\left|\Delta\left(z_{0}\right)\right| & \leq & K\norm{\frac{\partial G}{\partial y}}\exp\re{K+w_{*}-\nf{z_{0}}{\lambda}}\int_{\tilde{\gamma}}\exp\re{\left(a+\frac{1}{\lambda}\right)z}\left|\dd z\right|\,.
\end{eqnarray*}
Let $\alpha:=\left|a+\frac{1}{\lambda}\right|$. We bound each partial
integral $I_{\star\to\bullet}:=\int_{\left[z_{\star},z_{\bullet}\right]}\exp\re{\left(a+\frac{1}{\lambda}\right)z}\left|\dd z\right|$
in the following manner:
\begin{eqnarray*}
I_{0\to1} & \leq & \exp\re{\left(a+\frac{1}{\lambda}\right)z_{0}}\int_{0}^{\kappa-\re{z_{0}}}\exp\left(t\re{a+\frac{1}{\lambda}}\right)\dd t\\
 & \leq & \frac{\exp\re{\left(a+\frac{1}{\lambda}\right)\kappa}-\exp\re{\left(a+\frac{1}{\lambda}\right)z_{0}}}{\re{a+\frac{1}{\lambda}}}\,,\\
I_{1\to2} & \leq & \exp\re{\left(a+\frac{1}{\lambda}\right)z_{1}}\int_{0}^{\ln\rho-\kappa}\exp\left(t\re{\left(a+\frac{1}{\lambda}\right)\vartheta_{+}}\right)\dd t\\
 & \leq & \exp\left(\alpha\left|z_{1}\right|\right)\frac{\exp\left(\alpha\left(\ln\rho-\kappa\right)\right)-1}{\alpha}\\
 & \leq & \exp\left(\alpha\left(\sqrt{\kappa^{2}+\pi^{2}N^{2}}+\ln\rho-\kappa\right)\right)\,,\\
I_{2\to3} & \leq & \exp\left(\alpha\left|z_{2}\right|\right)\int_{0}^{\im{z_{3}-z_{2}}}\exp\left(t\left|\im{\frac{1}{\lambda}}\right|\right)\left|\dd t\right|\\
 & \leq & \exp\left(\alpha\left|z_{2}\right|+\left|\frac{1}{\lambda}\right|\left|\im{z_{3}-z_{2}}\right|\right)\\
 & \leq & \exp\left(\alpha\left(\left|z_{*}\right|+\pi N+\tan\left(\left|\arg\vartheta\right|+\delta\right)\left(\ln\rho-\kappa\right)\right)\right)\\
 &  & \,\,\,\,\,\times\exp\left(\left|\frac{1}{\lambda}\right|\left(2\pi N+\tan\left(\left|\arg\vartheta\right|+\delta\right)\left(\ln\rho-\kappa\right)\right)\right)\,,\\
I_{3\to*} & \leq & \exp\left(\alpha\left(\sqrt{\re{z_{*}}^{2}+\pi^{2}N^{2}}+\ln\rho-\re{z_{*}}\right)\right)\,.
\end{eqnarray*}
In particular the dominant integral in the above list is $I_{0\to1}$,
so that there exists a constant $M$, satisfying the required dependency
properties, with
\begin{eqnarray*}
\left|\Delta\left(z_{0}\right)\right| & \leq & M\norm{\frac{\partial G}{\partial y}}\exp\re{w_{*}+az_{0}}\,.
\end{eqnarray*}
Since $\re{a+\frac{1}{\lambda}}>0$ we have
\begin{eqnarray*}
\Delta\left(z_{0}\right) & = & o\left(\left|z_{0}\right|\exp\re{\frac{-z_{0}}{\lambda}}\right)
\end{eqnarray*}
as expected.

\subsection{\label{sub:Computations}Study of the model}

\subsubsection{Explicit computation}

We want to compute for $n,\,m\,\in\ww N$ the functions defined by
\begin{eqnarray*}
T_{n,m}\left(z\right) & := & \int_{\tilde{\gamma}\left(z\right)}\exp\left(nu+mw_{0}\left(u,z_{0}\right)\right)\dd u\\
 & = & \exp\left(m\left(w_{*}-\frac{z}{\lambda}\right)\right)\times\int_{z}^{z_{*}}\exp\left(\left(n+\frac{m}{\lambda}\right)u\right)\dd u\,.
\end{eqnarray*}
If $n+\nf m{\lambda}=0$ then $\lambda=-\nf pq$, with $p$ and $q$
co-prime positive integers, and $\left(n,m\right)=k\left(q,p\right)$
with $k\in\ww N$. In that case, and when $k>0$,
\begin{eqnarray}
T_{kq,kp}\left(z\right) & = & \left(z_{*}-z\right)\exp\left(k\left(pw_{*}+qz\right)\right)=O\left(\left|z\exp\re{az}\right|\right)\,.\label{eq:rational_estim}
\end{eqnarray}
The other case $n+\nf m{\lambda}\neq0$ is not more difficult:
\begin{eqnarray*}
T_{n,m}\left(z\right) & = & \exp\left(mw_{*}+nz\right)\frac{\exp\left(\left(n+\nf m{\lambda}\right)\left(z_{*}-z\right)\right)-1}{n+\nf m{\lambda}}\,.
\end{eqnarray*}
One can see easily that as $n+\nf m{\lambda}$ tends to zero (which
may happen if, and only if, $\lambda$ is a negative irrational) the
function $T_{n,m}$ grows in modulus. The dominant support introduced
in Definition~\ref{def_dominant_support} allows to discriminate
between two kinds of growth rate.

\subsubsection{Resonant support}

We show now that the resonant support consists of (quasi-)resonant
monomials only.
\begin{lem}
\label{lem:dominant_support}Assume that $\lambda<0$ and $a+\frac{1}{\lambda}>0$. 
\begin{enumerate}
\item If $\lambda=-\nf pq<0$ is a rational number then 
\begin{eqnarray*}
\tt{Res}\left(a,\lambda\right) & = & \left\{ k\left(q,p\right)\,:\,k\in\ww N\,,\,kq\geq a\right\} \,.
\end{eqnarray*}

\item If $\lambda$ is a negative irrational we denote by $\left(-\nf{p_{k}}{q_{k}}\right)_{k\in\ww N}$
its sequence of convergents. Then 
\begin{eqnarray*}
\tt{Res}\left(a,\lambda\right) & = & \left\{ \left(q_{k},p_{k}\right)\,:\,k\in\ww N\,,\,q_{k}\geq a\right\} \,.
\end{eqnarray*}

\end{enumerate}
\end{lem}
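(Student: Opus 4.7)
The proof is a direct arithmetic analysis of the Diophantine inequality $|n\lambda+m|<\frac{1}{2n}$ subject to $n\geq a$ and $m>0$, handled separately in the rational and irrational cases. In both cases I rewrite the condition in the equivalent form $\left|\frac{m}{n}-(-\lambda)\right|<\frac{1}{2n^{2}}$, which is the form to which the classical theorem of Legendre on continued fractions applies.

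\textbf{Rational case.} Writing $\lambda=-p/q$ with $\gcd(p,q)=1$, the condition becomes $|mq-np|<q/(2n)$. Since $mq-np\in\ww Z$, I claim it must vanish. Otherwise $|mq-np|\geq 1$ forces $2n<q$, which contradicts $n\geq a$ once one uses the preparation of Lemma~\ref{lem:preparation} to secure $a\geq q$ in the rational setting (condition $\left(\condA\right)$ alone gives only $a\geq q/p$, but the preparation strengthens it to $a\geq q$). From $mq=np$ with $\gcd(p,q)=1$ it follows that $q\mid n$, so $(n,m)=k(q,p)$ for some $k\in\ww N$, and the remaining constraints $m>0$, $n\geq a$ translate into $k\geq 1$ and $kq\geq a$.

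\textbf{Irrational case.} I invoke Legendre's theorem: any fraction $m/n$ in lowest terms with $\left|m/n-(-\lambda)\right|<1/(2n^{2})$ is a convergent $p_{k}/q_{k}$ of $-\lambda$. The constraint $n\geq a$ then selects those with $q_{k}\geq a$, while $m>0$ eliminates the eventual initial convergent with $p_{0}=0$. The reverse inclusion uses the classical upper estimate $|q_{k}\lambda+p_{k}|<1/q_{k+1}$, combined with the recursion $q_{k+1}=a_{k+1}q_{k}+q_{k-1}\geq q_{k}+q_{k-1}$, to produce the Legendre inequality $|q_{k}\lambda+p_{k}|<1/(2q_{k})$ at every convergent with $q_{k}\geq a$. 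To rule out non-primitive pairs $(dq_{k},dp_{k})$ with $d\geq 2$: the condition would demand $d\cdot|q_{k}\lambda+p_{k}|<1/(2dq_{k})$, i.e.\ $|q_{k}\lambda+p_{k}|<1/(2d^{2}q_{k})$, which contradicts the classical lower bound $|q_{k}\lambda+p_{k}|\geq 1/(q_{k}+q_{k+1})$ coming from the next partial quotient.

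\textbf{Main obstacle.} The rational case is essentially elementary. The delicate part is the irrational case: Legendre's theorem is only a \emph{sufficient} condition for a fraction to be a convergent, so the converse characterization requires careful use of the classical upper bound $|q_{k}\lambda+p_{k}|<1/q_{k+1}$, and the exclusion of non-primitive multiples hinges on the matching lower bound $|q_{k}\lambda+p_{k}|\geq 1/(q_{k}+q_{k+1})$. The interplay between these two sharp estimates on convergent approximation errors is the core technical ingredient.
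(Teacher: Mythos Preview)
Your treatment of the rational case, and of the inclusion $\tt{Res}(a,\lambda)\subseteq\{(q_k,p_k):q_k\geq a\}$ via Legendre's theorem in the irrational case, matches the paper exactly; the paper's own proof in fact stops at Legendre and says nothing further about the reverse inclusion or about primitivity.

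Both of your additional arguments contain real errors, and in fact the corresponding claims are false. For the reverse inclusion you need $1/q_{k+1}<1/(2q_k)$, and the chain $q_{k+1}\geq q_k+q_{k-1}$ would deliver this only if $q_{k-1}>q_k$, which never happens. Indeed the reverse inclusion fails in general: whenever $a_{k+1}=1$ and $a_{k+2}$ is large the complete quotient $\alpha_{k+1}=1+1/\alpha_{k+2}$ is barely above $1$, so $|q_k\lambda+p_k|=1/(\alpha_{k+1}q_k+q_{k-1})$ is close to $1/(q_k+q_{k-1})>1/(2q_k)$ and $(q_k,p_k)\notin\tt{Res}(a,\lambda)$. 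Likewise your exclusion of non-primitive pairs breaks down: the ``contradiction'' between $|q_k\lambda+p_k|<1/(2d^2q_k)$ and $|q_k\lambda+p_k|\geq 1/(q_k+q_{k+1})$ dissolves as soon as $q_{k+1}>(2d^2-1)q_k$, i.e.\ whenever the partial quotient $a_{k+1}$ is large enough; in that regime $(dq_k,dp_k)$ genuinely lies in $\tt{Res}(a,\lambda)$ without being of the form $(q_j,p_j)$. So the equality in part~(2) is not literally correct; what survives---and what the paper proves and actually uses downstream---is only that every $(n,m)\in\tt{Res}(a,\lambda)$ has $m/n$ equal to some convergent $p_k/q_k$ of $|\lambda|$.
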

\begin{proof}
~
\begin{enumerate}
\item Because we have $n\geq a\geq q$ the relation $\left|n\lambda+m\right|<\frac{1}{2n}$
becomes 
\begin{eqnarray*}
\left|np-mq\right| & < & \frac{q}{2n}<1\,.
\end{eqnarray*}
Hence $np=mq$ and since $p$ and $q$ are co-prime the conclusion
follows.
\item This is a consequence of the well-known result in continued-fraction
theory: if $\frac{p}{q}\in\ww Q_{>0}$ is given such that $\left|\frac{p}{q}+\lambda\right|<\nf{q^{-2}}2$
then $\left(p,q\right)$ is one of the convergents of $\left|\lambda\right|$.
\end{enumerate}
\end{proof}

\subsubsection{\label{sub:prop}Dominant terms: proof of Proposition~\ref{prop:model_dominant}}

Nothing needs to be proved for $G_{0}$ so we assume that $G$ expands
into a power series $G\left(x,y\right)=\sum_{n\geq a,m>0}G_{n,m}x^{n}y^{m}$
convergent on a closed polydisc of poly-radii at least $\left(\rho+\epsilon,r+\epsilon\right)$.
Because of the Cauchy formula, for all $n,\,m$
\begin{eqnarray*}
\left|G_{n,m}\right| & \leq & C\left(\rho+\epsilon\right)^{-n}\left(r+\epsilon\right)^{-m}
\end{eqnarray*}
where $C:=\sup_{\left|x\right|=\rho+\epsilon\,,\,\left|y\right|=r+\epsilon}\left|G\left(x,y\right)\right|$. 

If $\lambda$ is not real then 
\begin{eqnarray*}
\inf_{\left(n,m\right)\in\ww N^{2}\backslash\left\{ \left(0,0\right)\right\} }\left|n\lambda+m\right| & \geq & a\left|\im{\lambda}\right|>0\,.
\end{eqnarray*}
Let $z-z_{*}$ be given with imaginary part bounded by $N\pi$ for
some integer $N>0$ and with real part lesser than 
\begin{eqnarray*}
\mu & := & -\left|\frac{\im{\lambda}}{\re{\lambda}}\right|N\pi\,.
\end{eqnarray*}
 By construction of $\mu$ we have
\begin{eqnarray*}
\re{\frac{m}{\lambda}\left(z_{*}-z\right)} & = & \frac{m}{\left|\lambda\right|^{2}}\left(\re{\lambda}\re{z_{*}-z}+\im{\lambda}\im{z_{*}-z}\right)<0\\
 & \leq & \re{\frac{1}{\lambda}\left(z_{*}-z\right)}
\end{eqnarray*}
so that we derive at last 
\begin{eqnarray*}
\left|T_{n,m}\left(z\right)\right| & \leq & \frac{2\left|\lambda\right|r^{m}\rho^{n+\re{\nf 1{\lambda}}}}{a\left|\im{\lambda}\right|}\exp\re{-\frac{z}{\lambda}}\,.
\end{eqnarray*}
Now
\begin{eqnarray*}
\left|\int_{z}^{z_{*}}G\circ\mathcal{E}\dd z\right| & = & \left|\sum_{n\geq a\,,\,m>0}G_{n,m}T_{n,m}\left(z\right)\right|\\
 & \leq & \frac{2\left|\lambda\right|\rho^{\re{\nf 1{\lambda}}}C}{a\left|\im{\lambda}\right|}\times\frac{\left(r+\epsilon\right)\left(\rho+\epsilon\right)}{\epsilon^{2}}\times\exp\re{-\frac{z}{\lambda}}\\
 & = & O\left(\left|\exp\nf{-z}{\lambda}\right|\right)\,,
\end{eqnarray*}
ending the proof for the non-real case. In fact this reasoning goes
on holding even when $\lambda<0$ as long as $\left(n,m\right)$ belongs
not to $\rsupp$, since in that case 
\begin{eqnarray*}
\left|n+\nf m{\lambda}\right| & > & \frac{\left|\lambda\right|}{2n}
\end{eqnarray*}
 has strictly sub-geometric inverse. 

Take now $\lambda$ negative real and $G=G_{\tt{Res}}$. If $\lambda=-\nf pq$
is a negative rational number then~(\ref{eq:rational_estim}) provides
what remains to be proved. Assume now that $\lambda$ is irrational.
Because $\left|\exp z-1\right|\leq\left|z\right|$ when $\re z<0$
we have for $\left(n,m\right)\in\rsupp$ 
\begin{eqnarray*}
\left|T_{n,m}\left(z\right)\right| & \leq & \left|\left(z-z_{*}\right)\exp\left(nz\right)\right|r^{m}\,.
\end{eqnarray*}
Therefore 
\begin{eqnarray*}
\left|\int_{\gamma_{0}\left(z\right)}G_{\tt{Res}}\circ\mathcal{E}\dd z\right| & \leq & C\sum_{\left(n,m\right)\in\tt{Res}\left(a,\lambda\right)}\left(\rho+\epsilon\right)^{-n}\left(r+\epsilon\right)^{-m}\left|T_{n,m}\left(z\right)\right|\\
 & \leq & C\left|z_{*}-z\right|\sum_{\left(n,m\right)\in\tt{Res}\left(a,\lambda\right)}\left(\frac{\exp\re z}{\rho+\epsilon}\right)^{n}\left(\frac{r}{r+\epsilon}\right)^{m}\\
 & \leq & C\frac{\left(r+\epsilon\right)\left(\rho+\epsilon\right)}{\epsilon^{2}}\left|z_{*}-z\right|\exp\re{az}\\
 & = & O\left(\left|z\exp\nf{-z}{\lambda}\right|\right)
\end{eqnarray*}
as expected.

\bibliographystyle{preprints}
\bibliography{bibliography}

\end{document}